\documentclass[12pt,a4paper]{article}
\usepackage[utf8]{inputenc}

\author{Ivo Slegers}
\title{Equivariant harmonic maps depend real analytically on the representation}

\usepackage{amsmath}
\usepackage{amsfonts}
\usepackage{amsthm}
\usepackage{mathtools}
\usepackage{enumitem}
\usepackage{hyperref}
\usepackage{cleveref}

\newcommand{\N}{\mathbb{N}}

\newcommand{\R}{\mathbb{R}}
\newcommand{\C}{\mathbb{C}}

\DeclarePairedDelimiter{\norm}{\lVert}{\rVert}

\DeclarePairedDelimiter{\abs}{\lvert}{\rvert}

\DeclarePairedDelimiterX{\inner}[2]{\langle}{\rangle}{#1, #2}

\DeclareMathOperator{\Rep}{Rep}
\DeclareMathOperator{\Harm}{Harm}
\DeclareMathOperator{\Hom}{Hom}
\DeclareMathOperator{\GL}{GL}
\DeclareMathOperator{\SL}{SL}
\DeclareMathOperator{\SO}{SO}
\DeclareMathOperator{\tr}{trace}
\DeclareMathOperator{\vol}{vol}
\DeclareMathOperator{\id}{id}
\DeclareMathOperator{\pr}{pr}
\DeclareMathOperator{\im}{im}

\DeclareMathOperator{\HomHit}{Hom_{Hit}}

\newcommand{\ddt}[1][]{\frac{\partial}{\partial t_{#1}}}
\newcommand{\parder}[2]{\frac{\partial #1}{\partial #2}}

\renewcommand{\H}{\mathbb{H}}

\newtheorem{theorem}{Theorem}[section]
\newtheorem{proposition}[theorem]{Proposition}
\newtheorem{corollary}[theorem]{Corollary}
\newtheorem{lemma}[theorem]{Lemma}
\newtheorem{definition}[theorem]{Definition}

\newtheoremstyle{remarkstyle}
{}
{}
{}
{}
{\bfseries}
{.} 
{7pt}
{}

\theoremstyle{remarkstyle}
\newtheorem{remark}[theorem]{Remark}

\begin{document}
\maketitle

\begin{abstract}
We prove that when assuming suitable non-degeneracy conditions equivariant harmonic maps into symmetric spaces of non-compact type depend in a real analytic fashion on the representation they are associated to. The main tool in the proof is the construction of a family of deformation maps which are used to transform equivariant harmonic maps into maps mapping into a fixed target space so that a real analytic version of the results in \cite{EellsLemaire} can be applied.
\end{abstract}

\section{Introduction}\label{sec:introduction}

In this article we prove that equivariant harmonic maps into symmetric spaces of non-compact type  depend in a real analytic fashion on the representation they are associated to. Throughout this article we let $M$ be a closed real analytic Riemannian manifold, $\widetilde{M}$ its universal cover and $\Gamma = \pi_1(M)$ its fundamental group. Also we let $G$ be a real semisimple Lie group without compact factors and $X=G/K$ its associated symmetric space. If $\rho \colon \Gamma \to G$ is a reductive representation of $\Gamma$ in $G$ then by work of Corlette (\cite{Corlette}) there exists a $\rho$-equivariant harmonic map $f \colon \widetilde{M} \to X$. A map $f$ is called $\rho$-equivariant if
\[
f(\gamma m) = \rho(\gamma) f(m) \text{ for all } m \in \widetilde{M} \text{ and } \gamma\in \Gamma.
\]
These maps were used by Corlette to prove a version of super rigidity. They were also used by Hitchin and Simpson in the development of the Non-Abelian Hodge correspondence which gives an identification between representation varieties and moduli spaces of Higgs bundles over K\"ahler manifolds. 

In \cite{EellsLemaire} Eells and Lemaire proved that, under suitable non-degeneracy conditions, harmonic maps between closed Riemannian manifolds depend smoothly on the metrics on both the domain and the target (see also \cite{Sampson}). Similarly one expects that equivariant harmonic maps depend smoothly (or even real analytically) on the representation when a similar non-degeneracy condition is imposed. The purpose of the current article is to prove that this is indeed true.

The main result (\Cref{thm:mainresult}) of this article is as follows. If $(\rho_t)_t$ is a real analytic family of representations of $\Gamma$ in $G$ such that $\rho_0$ is reductive and the centraliser of its image contains no semi-simple elements then for all $t$ in a neighbourhood of zero there exist $\rho_t$-equivariant harmonic maps depending real analytically on $t$. Similarly we also prove real analytic dependence on the metric on the domain $M$. Furthermore we prove a real analytic version of the results in \cite{EellsLemaire} which will serve as the central analytic ingredient in the proof of the main theorem (see \Cref{prop:implicitfunctiontheorem}).

In \Cref{subsec:hitchinreps} we apply these results to families of Hitchin representations. Such families satisfy the assumptions of the main theorem (see \Cref{prop:realanalyticityhitcin}). As a result we can characterise certain sets as real analytic subsets of Teichm\"uller space and the set of Hitchin representations. Namely in \Cref{cor:nonimmersionisrealanalyticvar} we prove that the set of points at which the equivariant harmonic maps are not immersions is a real analytic subvariety of the universal Teichm\"uller curve crossed with the set of Hitchin representations. Similarly in \Cref{cor:minimalsurfsisrealanalyticvar} we prove that the set of points $Y$ in Teichm\"uller space and representations $\rho$ such that $Y$ can be realised as a minimal surface in $X/\rho(\Gamma)$ is a real analytic subvariety of Teichm\"uller space crossed with the set of Hitchin representations.\\\\
\textbf{Acknowledgements.} The author would like to thank Prof. Ursula Hamenst\"adt for suggesting this problem and her help and encouragement during the project. The author is also grateful for the financial support received from the Max Planck Institute for Mathematics.
\section{Statement of the results}\label{sec:statementofresults}
We first collect some preliminary definitions and results needed to give a statement of the main theorem.
\subsection{Harmonic maps}
If $(M,g)$ and $(N,h)$ are Riemannian manifolds with $M$ compact then a $C^1$ map $f \colon (M,g) \to (N,h)$ is called harmonic if it is a critical point of the Dirichlet energy functional
\[
E(f) = \frac{1}{2} \int_{M} \norm{df}^2 \vol_g.
\]
A harmonic map satisfies the Euler--Lagrange equation $\tau(f) = 0$ where $\tau(f) = \tr_{g} \nabla df$ is the tension field of $f$. If the domain $M$ is not compact a map is called harmonic if its tension field vanishes identically.

At a critical point the Hessian of the energy functional is given by
\[
\nabla^2 E(f)(X,Y) = \int_{M} [\inner{\nabla X}{\nabla Y} - \tr_g \inner{R^N(X, df \cdot) df \cdot}{Y}] \vol_g
\]
for $X,Y \in \Gamma^1(f^*TN)$. Here $\nabla$ denotes the connection on $f^*TN$ induced by the Levi--Civita connection on $TN$ and $R^N$ denotes the Riemannian curvature tensor of $(N,h)$. The non-degeneracy condition imposed on harmonic maps in \cite{EellsLemaire} is that a harmonic map $f$ is a non-degenerate critical point of the energy i.e. $\nabla^2 E(f)$ is a non-degenerate bilinear form. In \cite{Sunada} Sunada proved that if the target is a locally symmetric space of non-positive curvature this condition is satisfied if and only if the harmonic map is unique. We collect these non-degeneracy conditions in the following lemma.
\begin{lemma}[Sunada]\label{lem:nondegeneracy} Suppose $N = X/\Lambda$ (with $\Lambda \subset G$) is a locally symmetric space of non-positive curvature and $f \colon M \to N$ a harmonic map then the following are equivalent:
\begin{enumerate}[label = \roman*.]
\item $f$ is a non-degenerate critical point of $E$.
\item $f$ is the unique harmonic map in its homotopy class.
\item The centraliser of $\Lambda$ in $G$ contains no semi-simple elements.
\end{enumerate}
\end{lemma}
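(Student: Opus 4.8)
The plan is to exploit the fact that on a target of non-positive curvature the Hessian of the energy is positive semi-definite, so that non-degeneracy reduces to the triviality of its kernel, which can then be matched on the one hand with uniqueness and on the other with the algebraic condition on the centraliser. First I would record that, since $N$ has non-positive sectional curvature, the curvature term is non-negative: for any section $V$ of $f^*TN$,
\[
\nabla^2 E(f)(V,V) = \int_M \left[\norm{\nabla V}^2 - \tr_g \inner{R^N(V,df\,\cdot\,)df\,\cdot}{V}\right]\vol_g \ge 0,
\]
both integrands being pointwise non-negative. Hence $f$ is a non-degenerate critical point precisely when this form is definite, i.e. when its kernel is trivial, and $V$ lies in the kernel if and only if $\nabla V = 0$ and $\tr_g \inner{R^N(V,df\,\cdot\,)df\,\cdot}{V} = 0$; that is, the kernel consists exactly of the parallel Jacobi fields along $f$. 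This reduces the lemma to matching the (non)existence of a nonzero parallel Jacobi field with statements (ii) and (iii).

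For (i) $\Leftrightarrow$ (ii) I would invoke Hartman's theorem on harmonic maps into non-positively curved targets: homotopic harmonic maps $f_0,f_1$ are joined by a geodesic homotopy $(f_s)_s$ through harmonic maps, along which $x \mapsto d(f_0(x),f_1(x))$ is constant and $\partial_s f_s$ is a parallel Jacobi field. If (i) holds the kernel is trivial, so $\partial_s f_s \equiv 0$ and $f_0 = f_1$, giving (ii). Conversely a nonzero parallel Jacobi field can be integrated, using the homogeneity of $X$, into a nontrivial one-parameter family of harmonic maps in the homotopy class of $f$, so failure of (i) yields failure of (ii).

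For the equivalence with (iii) I would pass to the universal cover and view $\tilde f \colon \widetilde M \to X$ as a $\rho$-equivariant harmonic map with $\im \rho = \Lambda$. A parallel Jacobi field along $f$ lifts to a $\Gamma$-invariant parallel Jacobi field $\widetilde V$ along $\tilde f$; the conditions $\nabla \widetilde V = 0$ and $\tr_g\inner{R^X(\widetilde V,d\tilde f\,\cdot\,)d\tilde f\,\cdot\,}{\widetilde V} = 0$ express that $\widetilde V$ points in a flat, totally geodesic direction along which the image is translated. Using the Cartan decomposition and the identification of $\mathfrak{g}$ with Killing fields on $X$, I would extend $\widetilde V$ to a transvection Killing field, whose $\Gamma$-invariance places it in the centraliser Lie algebra $\mathfrak{z}_\mathfrak{g}(\Lambda)$ and makes it the generator of a one-parameter group of semi-simple (hyperbolic) isometries commuting with $\Lambda$; conversely a nontrivial semi-simple element of $Z_G(\Lambda)$ generates such a transvection flow whose restriction to $\tilde f$ is a nonzero parallel Jacobi field. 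Thus a nonzero parallel Jacobi field exists if and only if $Z_G(\Lambda)$ contains a nontrivial semi-simple element, closing the chain of equivalences.

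I expect the main obstacle to be precisely this last correspondence: globalising a parallel Jacobi field defined only along $\tilde f(\widetilde M)$ to an honest Killing field on all of $X$, and verifying that it is of transvection (hence semi-simple) type rather than an elliptic or mixed isometry. This requires the de Rham and holonomy analysis of the smallest totally geodesic submanifold of $X$ containing the image, together with the structure theory of isometries of symmetric spaces of non-compact type, and it is here that the non-compactness of $X$ and the reductivity of $\Lambda$ are genuinely used.
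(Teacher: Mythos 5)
Your overall strategy coincides with the paper's: identify non-degeneracy of $\nabla^2 E(f)$ with triviality of the kernel of the Jacobi operator $J_f$, identify that kernel with infinitesimal deformations of $f$ through homotopic harmonic maps, and match those deformations with semi-simple elements of $Z_G(\Lambda)$. The equivalence of (i) and (ii) as you argue it (positive semi-definiteness of the Hessian on a non-positively curved target, so the kernel consists of parallel Jacobi fields; Hartman-type geodesic homotopies in one direction; integration of kernel elements to one-parameter families of harmonic maps in the other) is sound, and is essentially what the paper does by citing \cite[Proposition 3.2]{Sunada} together with the connectedness of $\Harm(M,N,f)$ from Jost. (Minor point: the integrability of a kernel element to a family of harmonic maps comes from the non-positive curvature, not from homogeneity of $X$.)

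The genuine gap is in the equivalence with (iii), and you have located it yourself but not closed it. Everything hinges on showing that a nonzero parallel Jacobi field along $\tilde f$ gives rise to a nontrivial \emph{semi-simple} isometry of $X$ centralising $\Lambda$, and conversely. Your sketch --- extend the lifted field to a transvection Killing field via the Cartan decomposition --- is not carried out, and as stated it is not quite right: semi-simple isometries include elliptic ones, not only transvections; it is not automatic that the Killing field you produce is globally defined on $X$, lies in $\mathfrak{z}_{\mathfrak{g}}(\Lambda)$, and generates a semi-simple rather than parabolic one-parameter group; and in the converse direction the one-parameter subgroup through a semi-simple $g\in Z_G(\Lambda)$ need not itself centralise $\Lambda$ (it is cleaner to take the variation field of the geodesic homotopy from $\tilde f$ to $g\cdot\tilde f$). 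This missing step is precisely \cite[Lemma 3.4]{Sunada} --- every harmonic map homotopic to $f$ has the form $g\cdot f$ for some semi-simple $g\in Z_G(\Lambda)$ --- which the paper invokes as a black box rather than reproving. So your proposal reproduces the paper's architecture but leaves its one substantive input unestablished; either cite Sunada's lemma or supply the structure-theoretic argument you allude to.
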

\begin{proof}
We can write $\nabla^2E(f) =- \int_M \inner{J_f \cdot}{\cdot} \vol_g$ where 
\[
J_f(X) = \tr_g \nabla^2 X + \tr_g R^N(X, df\cdot)df\cdot
\]
is the Jacobi operator at $f$. As discussed in \cite[p.35]{EellsLemaire} the Hessian $\nabla^2 E(f)$ is non-degenerate precisely when $\ker J_f = 0$. It follows from \cite[Proposition 3.2]{Sunada} that the set 
\[
\Harm(M,N, f) := \{h \colon M \to N \mid h \text{ is harmonic and homotopic to } f\}
\]
is a submanifold of $W^k(M,N)$ (the space of maps from $M$ to $N$ equipped with the $W^{k,2}$ Sobolev topology) with tangent space at $h$ equal to $\ker J_h$. Because $N$ has non-positive curvature the space $\Harm(M,N,f)$ is connected (\cite[Theorem 8.7.2]{Jost}). We see that $f$ is a non-degenerate critical point of the energy if and only if $\Harm(M,N,f)$ contains only $f$. If $g \in G$ is a semi-simple isometry centralising $\Lambda$ then it is clear that $h=g\cdot f$ is a distinct harmonic map homotopic to $f$. Conversely, if $h$ is a harmonic map homotopic to $f$ then there exists a semi-simple $g\in G$ contained in the centraliser of $\Lambda$ such that $h = g\cdot f$ by \cite[Lemma 3.4]{Sunada}. We conclude that $f$ is the unique harmonic map in its homotopy class if and only if the centraliser of $\Lambda$ in $G$ contains no semi-simple elements.
\end{proof}
The main existence result in the theory of equivariant harmonic maps is the following theorem of Corlette.
\begin{proposition}[\cite{Corlette}]\label{prop:existenceharmonicmaps}
A representation $\rho \colon \Gamma \to G$ is reductive if and only if there exists a $\rho$-equivariant harmonic map $f\colon \widetilde{M} \to X$.
\end{proposition}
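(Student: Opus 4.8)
The statement is an equivalence, so I would treat the two implications separately; the forward direction (reductive $\Rightarrow$ existence) is the substantial one and is essentially Corlette's theorem, while the converse is a shorter convexity argument.

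First I would set up the variational framework. A $\rho$-equivariant map $f\colon\widetilde{M}\to X$ is the same datum as a section of the flat bundle $\widetilde{M}\times_\rho X\to M$, so because $M$ is compact the energy $E(f)=\frac12\int_M\norm{df}^2\vol_g$ is finite and well-defined on $\rho$-equivariant maps of Sobolev class $W^{1,2}$, and a $\rho$-equivariant harmonic map is exactly a critical point of this restricted functional. Since $X$ has non-positive curvature the curvature term in the Hessian formula above has a favourable sign, so the second variation of $E$ is non-negative; the real task is therefore to produce a minimiser and then invoke elliptic regularity to see that it is smooth and satisfies $\tau(f)=0$.

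To produce a minimiser I would run the equivariant harmonic map heat flow $\partial_t f=\tau(f)$ from an arbitrary smooth equivariant $f_0$. Since the target is complete and non-positively curved and the base $M$ is compact, the Eells--Sampson machinery gives long-time existence together with uniform energy-density bounds, and the energy is non-increasing along the flow. Everything then hinges on convergence as $t\to\infty$: one wants a sequence $t_n\to\infty$ with $f_{t_n}$ converging to a harmonic map, and the single obstruction is that the images may run off to the boundary at infinity $\partial_\infty X$. Ruling this out is where reductivity is used and is the heart of the argument: if no subsequence converges I would extract from the direction of escape a point $\xi\in\partial_\infty X$ fixed by all of $\rho(\Gamma)$, whose stabiliser is a proper parabolic subgroup $P\supseteq\rho(\Gamma)$, and then, tracking the drift via the Busemann function $b_\xi$ and its associated character $P\to\R$, show that the escape forces $\rho(\Gamma)$ off every Levi factor of $P$, i.e.\ that $\rho$ is not reductive. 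Contrapositively, reductivity prevents escape, the flow subconverges, and we obtain the desired harmonic map. Making this dichotomy quantitative --- precisely that the flow fails to converge if and only if $\rho$ is non-reductive --- is the main technical obstacle, and it rests on the structure theory of parabolic subgroups of $G$ and the convexity of distance and Busemann functions on the Hadamard manifold $X$.

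For the converse I would argue by contraposition using the same convexity. Assume a $\rho$-equivariant harmonic $f$ exists but $\rho$ is not reductive, so $\rho(\Gamma)$ lies in a proper parabolic $P=\mathrm{Stab}_G(\xi)$ and in no Levi factor. The Busemann function $b_\xi$ is convex on $X$, and the composition formula $\Delta(b_\xi\circ f)=\nabla^2 b_\xi(df,df)+d b_\xi(\tau(f))=\nabla^2 b_\xi(df,df)\ge 0$ shows that $b_\xi\circ f$ is subharmonic on $\widetilde{M}$; since $b_\xi$ changes by an additive character of $P$ under $\rho(\Gamma)$, its differential descends to a closed $1$-form on the compact manifold $M$, and integration by parts forces $\nabla^2 b_\xi(df,df)\equiv 0$. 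Thus $df$ avoids the strictly convex directions of $b_\xi$, so $f$ maps into the horospherical geometry and effectively factors through the symmetric space of the Levi of $P$; iterating this reduction pins $\rho(\Gamma)$ into a Levi factor, contradicting non-reductivity. The delicate point here is the descent/integration step together with the induction on the rank of the parabolic, but no new analytic input beyond the maximum principle on $M$ is required.
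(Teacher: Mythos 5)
The paper does not prove this statement: it is imported verbatim as Corlette's theorem, with only the definition of ``reductive'' (Zariski closure of the image is a reductive subgroup) supplied afterwards. So there is no internal proof to compare against, and the real question is whether your sketch is a credible account of the cited result.

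Structurally it is: the heat-flow (or minimizing-sequence) approach with the escape-to-infinity dichotomy for the forward direction, and the Busemann/convexity argument for the converse, is exactly the strategy of Corlette, Labourie and Jost--Yau. Your composition formula $\Delta(b_\xi\circ f)=\nabla^2 b_\xi(df,df)+db_\xi(\tau(f))$ and the descent of $d(b_\xi\circ f)$ to a closed $1$-form on compact $M$ are correct and do yield $\nabla^2 b_\xi(df,df)\equiv 0$. But as a proof the proposal has two acknowledged holes that are precisely where all the work lies. First, in the forward direction, the claim that failure of subconvergence of the flow produces a single point $\xi\in\partial_\infty X$ fixed by all of $\rho(\Gamma)$, and that the associated drift forces $\rho(\Gamma)$ off every Levi factor, is asserted rather than argued; extracting a common fixed point at infinity from a divergent minimizing family and relating the resulting parabolic to the Zariski closure of $\rho(\Gamma)$ is the entire content of the theorem, and requires either ultralimit techniques or a careful convexity argument that you do not indicate. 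Second, in the converse, the passage from $\nabla^2 b_\xi(df,df)\equiv 0$ to ``$f$ factors through the symmetric space of the Levi, hence $\rho(\Gamma)$ conjugates into a Levi'' hides a genuine reduction step: one must show that the image of $f$ lies in a single orbit of the horospherical foliation on which the unipotent radical acts trivially enough to project, and then run an induction on parabolics; the statement ``iterating this reduction pins $\rho(\Gamma)$ into a Levi factor'' is the conclusion one wants, not a step one has. In short: correct roadmap, consistent with the literature the paper cites, but not a proof --- the two central implications are named rather than established.
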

A representation $\rho \colon \Gamma \to G$ is called reductive if the Zariski closure of it image in $G$ is a reductive subgroup.
\subsection{Families of representations and metrics}
We will index families of representations or metrics by open balls in $\R^n$. For $\epsilon>0$ we denote by $D_\epsilon$ the open $\epsilon$ ball in $\R^n$ centred at $0$. Let $(\rho_t)_{t\in D_\epsilon}$ be a family of representations $\rho_t \colon \Gamma \to G$. Such a family induces a natural action of $\Gamma$ on $X\times D_\epsilon$ given by $\gamma \cdot (x,t) = (\rho_t(\gamma)x, t)$. We make the following properness and freeness assumption on the families of representations we will consider.
\begin{definition}\label{def:uniformlyfreeandproper}
We call a family of representations uniformly free and proper if the induced action on $X\times D_\epsilon$ is free and proper.
\end{definition}
In particular, each representation in such a family acts freely and properly on $X$. On families of representations we will make the following regularity assumption.
\begin{definition}\label{def:analyticfamilyrepresentations}
A family of representations $(\rho_t)_{t\in D_\epsilon}$ of $\Gamma$ in $G$ is called real analytic if for every $\gamma\in \Gamma$ the map $D_\epsilon \to G \colon t \mapsto \rho_t(\gamma)$ is real analytic.
\end{definition}
\begin{remark}
A family of representations can be seen as a map from $D_\epsilon$ into $\Hom(\Gamma, G)$, the set of representations of $\Gamma$ into $G$. If $G$ is an algebraic subgroup of $\GL(n,\R)$ and if $S$ is a generating set of $\Gamma$ with relations $R$ then $\Hom(\Gamma,G)$ can be realised as the closed subset of $\GL(n,\R)^S$ consisting of tuples $(g_1,...,g_n)$ satisfying the relations $r(g_1,...,g_n)=1$ for $r\in R$. In this way we realise $\Hom(\Gamma,G)$ as a real algebraic variety. We note that in this case a family of representations is real analytic if and only if the map $D_\epsilon \to \Hom(\Gamma, G)$ is real analytic.
\end{remark}

Finally, on families of metrics we make the following regularity assumptions.
\begin{definition}\label{def:analyticfamilymetrics}
We call a family $(g_t)_{t\in D_\epsilon}$ of Riemannian metrics on $M$ a real analytic family of metrics if $(x,t) \mapsto g_t(x)$ induces a real analytic map $M\times D_\epsilon \to S^2 T^*M$.
\end{definition}

\subsection{Mapping spaces}
If $M$ and $N$ are real analytic manifolds we denote by $C^{k,\alpha}(M,N)$ ($k\in \N, 0<\alpha<1$) the space of $k$-times differentiable maps from $M$ to $N$ such that the $k$th derivatives are $\alpha$-H\"older continuous. We equip these spaces with the topology of uniform $C^{k,\alpha}$ convergence on compact sets. If the domain manifold $M$ is compact then these spaces can be equipped with a natural real analytic Banach manifold structure.

There is no such Banach manifold structure when $M$ is not compact. It is possible to instead give a Fr\'echet manifold structure where a chart around a point $f \colon M \to N$ is modelled on spaces of sections of $f^*TN$ with compact support. Such structures are not useful when considering convergence of equivariant maps $\widetilde{M} \to X$ because variations will necessarily not be compactly supported. We will instead make use of the fact that equivariant maps are determined by their values on a fundamental domain which allows us to state our results using Banach manifolds after all.

When $M$ is a closed manifold we let $\Omega \subset \widetilde{M}$ be a bounded domain containing a fundamental domain for the action of $\Gamma$ on $\widetilde{M}$. We note that a map $\widetilde{M} \to X$ that is equivariant with respect to any representation is completely determined by its restriction to $\Omega$. Furthermore, $\rho_n$-equivariant maps $f_n$ converge to a $\rho$-equivariant map $f$ uniformly on compacts if and only if the restrictions $f_n\vert_\Omega$ converge uniformly to $f\vert_\Omega$.

We will consider the space of bounded functions from $\Omega$ to $X$. For this we equip $M$ with a background metric and for simplicity we identify $X$ with $\R^n$ via the exponential map $\exp_o \colon T_o X \to X$ based at some basepoint $o\in X$. The metric on $M$ induces a $C^{k,\alpha}$ norm on the space of functions $\Omega \to \R^n \cong X$. We denote by $C^{k,\alpha}_b(\Omega, X)$ the space of functions for which this norm is bounded. This space can be equipped with the structure of a real analytic Banach manifold. For this we observe that equipped with the $C^{k,\alpha}$ norm the space $C^{k,\alpha}_b(\Omega, X)$ becomes a Banach space (note that the linear structure comes from the identification $X \cong \R^n$ and carries no direct geometric meaning). The Banach manifold structure is obtained by declaring this to be a global chart. One can check that the Banach manifold structure is independent of the choice of background metric on $M$ and basepoint in $X$ (see \Cref{lem:omegalemma}). We would like to note that, although the use of the identification $X \cong \R^n$ is somewhat ad hoc, if we replace the domain $\Omega$ by a closed manifold $M$ then the above construction yields the usual Banach manifold structure on the space $C^{k,\alpha}(M,X)$.
\subsection{Main result}
The main result of this article can be stated as follows.
\begin{theorem}\label{thm:mainresult}
Let $(g_t)_{t\in D_\epsilon}$ be a real analytic family of metrics on $M$ and let $(\rho_t)_{t\in D_\epsilon}$ be a real analytic family of representations of $\Gamma$ in $G$. We assume that the family $(\rho_t)_{t\in D_\epsilon}$ is uniformly free and proper. Suppose $\rho_0$ is reductive and the centraliser $Z_G(\im \rho_0)$ contains no semi-simple elements. Then for every $k \in \N, 0< \alpha<1$ there exists a $\delta>0$ smaller then $\epsilon$ and a unique continuous map $F \colon D_\delta \to C^{k,\alpha}(\widetilde{M}, X)$ such that each $F(t)$ is a $\rho_t$-equivariant harmonic map $(\widetilde{M}, g_t)\to X$ and the restricted map $F(\cdot)\vert_{\Omega} \colon D_\delta \to C^{k,\alpha}_b(\Omega, X)$ is real analytic.
\end{theorem}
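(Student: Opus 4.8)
The plan is to follow the strategy indicated in the abstract: first replace the moving-target problem by a fixed-target problem using a family of deformation maps, and then solve the resulting harmonic map equation by a real analytic implicit function theorem.

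First I would construct the family of deformation maps. The uniform freeness and properness assumption (\Cref{def:uniformlyfreeandproper}) ensures that the quotient $(X\times D_\epsilon)/\Gamma$ is a smooth manifold submersing onto $D_\epsilon$ with fibre $X/\rho_t(\Gamma)$ over $t$. I want, after possibly shrinking $\epsilon$, a family of $\Gamma$-equivariant diffeomorphisms $\Phi_t \colon X \to X$ with $\Phi_0 = \id$ intertwining the two actions, $\Phi_t \circ \rho_0(\gamma) = \rho_t(\gamma) \circ \Phi_t$ for all $\gamma \in \Gamma$, and depending real analytically on $t$. Because the fibres $X/\rho_t(\Gamma)$ are in general noncompact one cannot simply invoke an Ehresmann-type argument to trivialise the submersion, so the equivariant and real analytic construction of the $\Phi_t$ is the heart of the matter and the step I expect to be the main obstacle.

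With the deformation maps in hand, composition $f \mapsto \Phi_t^{-1}\circ f$ gives a bijection between $\rho_t$-equivariant maps and $\rho_0$-equivariant maps $\widetilde M \to X$, and the latter descend to maps $M \to N_0 := X/\rho_0(\Gamma)$. Under this correspondence a $g_t$-harmonic $\rho_t$-equivariant map $f$ corresponds to a harmonic map $(M, g_t) \to (N_0, h_t)$, where $h_t := (\Phi_t)^* h$ is a real analytic family of metrics on $N_0$ with $h_0$ the original locally symmetric metric. This is precisely the situation of \cite{EellsLemaire}, now with both domain and target fixed compact manifolds carrying real analytic families of metrics. I would then phrase the harmonic map equation as the vanishing of a tension field operator on the Banach manifold of maps; to accommodate the equivariance and the noncompact domain I would model this on the space $C^{k,\alpha}_b(\Omega, X)$ introduced above (see also \Cref{lem:omegalemma}), using that a $\rho_0$-equivariant map is determined by its restriction to $\Omega$. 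The real analyticity of the metrics, together with the identification $X \cong \R^n$, should make $(t, u) \mapsto \tau_{g_t, h_t}(u)$ a real analytic map between Banach manifolds, composition operators with real analytic maps being real analytic.

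Let $u_0$ be the map corresponding to the $\rho_0$-equivariant harmonic map guaranteed by \Cref{prop:existenceharmonicmaps}. At $t=0$ the derivative of the tension field operator in the $u$-direction is $-J_{u_0}$, the Jacobi operator of the genuine harmonic map $u_0$ into the locally symmetric space $(N_0, h_0)$. The hypothesis that $Z_G(\im \rho_0)$ contains no semi-simple elements gives, via \Cref{lem:nondegeneracy}, that $\ker J_{u_0} = 0$; since $J_{u_0}$ is a self-adjoint elliptic operator of index zero, triviality of its kernel upgrades to invertibility between the relevant H\"older spaces. I can therefore apply the real analytic implicit function theorem (\Cref{prop:implicitfunctiontheorem}) to obtain a $\delta>0$ and a unique real analytic family $t \mapsto u(t)$ solving $\tau_{g_t, h_t}(u(t)) = 0$ with $u(0) = u_0$. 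Finally I would transform back by setting $F(t) := \Phi_t \circ u(t)$: the intertwining property makes each $F(t)$ a $\rho_t$-equivariant map, and it is $g_t$-harmonic because $u(t)$ solves the transformed equation. Real analyticity of $t \mapsto \Phi_t$ and of $t \mapsto u(t)\vert_\Omega$, together with that of composition operators, yields that $t \mapsto F(t)\vert_\Omega \in C^{k,\alpha}_b(\Omega, X)$ is real analytic; continuity of $F$ into $C^{k,\alpha}(\widetilde M, X)$ then follows by covering $\widetilde M$ with $\Gamma$-translates of $\Omega$ and using equivariance, and uniqueness is inherited from the implicit function theorem under the correspondence. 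Once the analytic deformation maps are available, the remaining steps are a real analytic reprise of the Eells--Lemaire argument adapted to the equivariant, noncompact-domain setting.
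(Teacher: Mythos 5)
Your overall strategy is the paper's: deform to a fixed target, pull back the metric, and apply a real analytic implicit function theorem. But there is a genuine gap at precisely the step you flag as ``the heart of the matter'': you never construct the deformation maps, and the statement you ask for --- a real analytic family of global equivariant diffeomorphisms $\Phi_t \colon X \to X$ --- is both stronger than what is needed and stronger than what the paper's method delivers. The paper (\Cref{prop:deformationmaps}, \Cref{lem:constructionvectorfields}) only produces $\Phi_t$ on $U_R$, an $R$-neighbourhood of the $\rho_0(\Gamma)$-orbit of a basepoint, and this restriction is forced: the construction first builds $\alpha(\Gamma)$-invariant \emph{smooth} vector fields on $X \times D_\epsilon$ by summing a bump function over the group (locally finite by uniform properness), then approximates them by real analytic ones --- but real analytic approximation is only available on compact subsets of the quotient $(X\times D_\epsilon)/\alpha(\Gamma)$, and only $U_R \times \{0\}$ has precompact image there (the quotient itself need not be compact, so your parenthetical claim that domain and target are ``fixed compact manifolds'' after the reduction is also false in general). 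One then flows along the resulting vector fields to get $\Psi$ with $\pi \circ \Psi = \pi$, shrinks to make it an equivariant diffeomorphism onto its image, and restricts to $U_R \times D_\delta$. Without some such argument your proof does not get off the ground, since everything downstream depends on having real analytic, equivariant, intertwining maps $\Phi_t$.

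Two smaller points. First, because $\Phi_t$ is only defined on $U_R$, the pulled-back metrics $h_t = \Phi_t^* m$ live only on $V = B(o',R) \subset N_0$; one must choose $R$ large enough that the image of the $\rho_0$-equivariant harmonic map lies in $B(o',R)$, and then apply \Cref{prop:implicitfunctiontheorem} to maps $M \to V$. Second, once the problem is reduced to harmonic maps from the compact $M$ into $(V,h_t)$, the implicit function theorem is run on the ordinary Banach manifold $C^{k,\alpha}(M,V)$; the space $C^{k,\alpha}_b(\Omega,X)$ only enters at the end, when one lifts $G(t)$ equivariantly and composes with $\Phi_t$ to state real analyticity of $t \mapsto F(t)\vert_\Omega$. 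Your remaining steps (non-degeneracy of $\nabla^2 E$ via \Cref{lem:nondegeneracy}, invertibility of the linearised tension field, transforming back by $F(t) = \Phi_t \circ G(t)$, and inheriting uniqueness) do match the paper.
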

\begin{remark}
The above result is also true in the smooth category. More precisely we can define, analogous to Definitions \ref{def:analyticfamilyrepresentations} and \ref{def:analyticfamilymetrics}, the notion of smooth families of metrics and representations. Then \Cref{thm:mainresult} also holds when we replace `real-analytic' by `smooth'. For brevity we will not prove the smooth case here but the reader can easily check that the proof goes through also in this case.
\end{remark}

If each $\rho_t$ is reductive and has trivial centraliser then by applying the above theorem at each $t\in D_\epsilon$ we obtain immediately the following corollary.
\begin{corollary}\label{cor:nondegeneratefamily}
Let $(g_t)_{t\in D_\epsilon}$, $(\rho_t)_{t\in D_\epsilon}$ be as in \Cref{thm:mainresult}. Suppose that for every $t\in D_\epsilon$ the representation $\rho_t$ is reductive and $Z_G(\im \rho_t) = 0$. Then for all $k\in \N, 0 < \alpha < 1$ there exists a unique continuous map $F \colon D_\epsilon \to C^{k,\alpha}(\widetilde{M}, X)$ such that each $F(t) \colon (\widetilde{M},g_t)\to X$ is a $\rho_t$-equivariant harmonic map and the restricted map $F(\cdot)\vert_\Omega \colon D_\epsilon \to C^{k,\alpha}_b(\Omega,X)$ is real analytic.
\end{corollary}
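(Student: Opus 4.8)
The plan is to deduce the corollary from \Cref{thm:mainresult} by applying that theorem at each point of $D_\epsilon$ and patching the resulting local solutions together. What makes the patching work is that the hypothesis $Z_G(\im \rho_t) = 0$ forces the $\rho_t$-equivariant harmonic map to be unique for every $t$. Indeed, a trivial centraliser contains no nontrivial semi-simple elements, so \Cref{lem:nondegeneracy}, applied to the locally symmetric space $X/\rho_t(\Gamma)$ (a manifold of non-positive curvature since the induced $\Gamma$-action is free and proper), shows that the descended harmonic map on the compact quotient is unique in its homotopy class, and hence that the equivariant map $\widetilde M \to X$ itself is unique. Combined with \Cref{prop:existenceharmonicmaps}, which supplies existence because each $\rho_t$ is reductive, this gives for every $t \in D_\epsilon$ a unique $\rho_t$-equivariant harmonic map; I define $F(t)$ to be this map.

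It then remains to show that the globally defined $F$ is continuous and that $F(\cdot)\vert_\Omega$ is real analytic. Both properties of a map out of $D_\epsilon$ are local, so it suffices to verify them in a neighbourhood of an arbitrary $t_0 \in D_\epsilon$. To apply \Cref{thm:mainresult} at $t_0$ I would recentre the families by setting $\tilde\rho_s := \rho_{t_0+s}$ and $\tilde g_s := g_{t_0+s}$ for $s$ in a small ball about $0$. The translated family of representations is again real analytic, and it is uniformly free and proper because the induced $\Gamma$-action on $X \times (t_0 + D_\delta)$ is the restriction of the free and proper action on $X \times D_\epsilon$ to an open $\Gamma$-invariant subset. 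Since $\tilde\rho_0 = \rho_{t_0}$ is reductive with $Z_G(\im \rho_{t_0}) = 0$ (whence its centraliser contains no semi-simple elements), \Cref{thm:mainresult} yields a continuous local solution $F_{t_0}$ on a ball about $t_0$ whose restriction to $\Omega$ is real analytic.

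Finally I would invoke the pointwise uniqueness established in the first step: each value $F_{t_0}(t)$ is by construction a $\rho_t$-equivariant harmonic map, hence equals $F(t)$. Thus $F$ coincides with $F_{t_0}$ on a neighbourhood of $t_0$, so $F$ is continuous there and $F(\cdot)\vert_\Omega$ is real analytic there; letting $t_0$ range over $D_\epsilon$ gives the asserted regularity on all of $D_\epsilon$. Uniqueness of $F$ is immediate, since any map with the stated properties must assign to each $t$ the unique $\rho_t$-equivariant harmonic map. I expect the only step requiring genuine care to be the reduction of the centraliser hypothesis to pointwise uniqueness through \Cref{lem:nondegeneracy}, as that lemma is phrased for maps of the compact quotient $M \to X/\Lambda$ and one must confirm that uniqueness there corresponds exactly to uniqueness of the equivariant map on $\widetilde M$; all of the analytic substance of the corollary is already contained in \Cref{thm:mainresult}.
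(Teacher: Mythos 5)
Your proposal is correct and follows essentially the same route as the paper, which simply observes that the corollary is obtained by applying \Cref{thm:mainresult} at each $t\in D_\epsilon$; you have merely filled in the details the paper leaves implicit (recentring the families, pointwise uniqueness via \Cref{lem:nondegeneracy} and \Cref{prop:existenceharmonicmaps} to patch the local solutions, and locality of continuity and real analyticity). The one point you flag --- that uniqueness in the homotopy class downstairs corresponds to uniqueness of the equivariant map upstairs --- is handled correctly by noting that any two $\rho_t$-equivariant maps are equivariantly geodesically homotopic, so no gap remains.
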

\subsubsection{Hitchin representations}\label{subsec:hitchinreps}
We briefly mention how the above results can be applied when we consider Hitchin representations. In this section we let $M = S$ be a closed surface of genus $g \geq 2$ and as before $\Gamma = \pi_1(S)$. In this case the harmonicity of a map $f\colon S \to N$ depends only on the conformal class of the metric on $S$. Also in this section we let $G$ be a split real Lie group.

In \cite{Hitchin} Hitchin proved that each representation variety $\Rep(\Gamma, G) = \Hom_{\text{red}}(\Gamma,G)/G$ contains a connected component, now called the Hitchin component, which contains $\mathcal{T}(S)$, the Teichm\"uller space of $S$, in a natural way. We denote by $\HomHit(\Gamma,G)$ the component of $\Hom(\Gamma, G)$ consisting of Hitchin representations i.e. representations of $\Gamma$ in $G$ whose conjugacy class lies in the Hitchin component.

Hitchin representations enjoy many special properties. Relevant to our discussion is that they are reductive and the centraliser of their image is trivial. Also, in \cite{Labourie} Labourie showed that Hitchin representations act freely and are Anosov representations. It follows from \cite[Theorem 7.33]{KLP} that continuous families of Anosov representations satisfy the uniformly free and proper assumption as in \Cref{def:uniformlyfreeandproper}.

It follows that there exists a map $F \colon \mathcal{T}(S) \times \HomHit(\Gamma,G) \to C^{k,\alpha}(\widetilde{S},X)$ assigning to each $(J, \rho)$ the unique $\rho$-equivariant harmonic map $(\widetilde{S},J) \to X$. A chart of $\HomHit(\Gamma,G)$ modelled on $D_\epsilon$ can be seen as real analytic family of representations that is uniformly free and proper. Furthermore it follows from \cite{Wolf} that it is possible to choose metrics $g_J$ on $S$ representing points in Teichm\"uller space $J\in \mathcal{T}(S)$ depending on $J$ in a real analytic fashion. By applying \Cref{cor:nondegeneratefamily} to charts around points $(J,\rho) \in \mathcal{T}(S) \times \HomHit(\Gamma,G)$ we obtain the following proposition.
\begin{proposition}\label{prop:realanalyticityhitcin}
For all $k\in \N, 0 < \alpha < 1$ the map 
\[
F \colon \mathcal{T}(S) \times \HomHit(\Gamma,G) \to C^{k,\alpha}(\widetilde{S}, X)
\]
assigning to each $(J,\rho)$ the unique $\rho$-equivariant harmonic map $(\widetilde{S},J) \to X$ is continuous and the restricted map $F(\cdot,\cdot)\vert_\Omega \colon \mathcal{T}(S) \times \HomHit(\Gamma,G) \to C_b^{k,\alpha}(\Omega,X)$ is real analytic.
\end{proposition}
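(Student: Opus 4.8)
The plan is to reduce the proposition to \Cref{cor:nondegeneratefamily} by a local argument in charts, exploiting that continuity and real analyticity are local conditions on the source $\mathcal{T}(S) \times \HomHit(\Gamma, G)$. I would first record that both factors are real analytic manifolds — Teichm\"uller space carries its standard real analytic structure, and $\HomHit(\Gamma, G)$ admits real analytic charts modelled on balls $D_\epsilon$ as noted above — so their product is a real analytic manifold with real analytic transition maps. Consequently it suffices to establish the two regularity statements after precomposing $F$ with an arbitrary real analytic chart.

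So I would fix a point $(J_0, \rho_0)$ and a chart $\phi \colon D_\epsilon \to \mathcal{T}(S) \times \HomHit(\Gamma, G)$ around it, writing $\phi(t) = (J(t), \rho(t))$, and from it manufacture a single combined family indexed by the ball $D_\epsilon$. Setting $\rho_t := \rho(t)$ gives a real analytic family of representations in the sense of \Cref{def:analyticfamilyrepresentations}, since $t \mapsto \rho(t)$ is real analytic into $\HomHit(\Gamma, G)$. For the domain metrics I would invoke \cite{Wolf} to pick representatives $g_J$ of the conformal classes $J$ depending real analytically on $J$, and set $g_t := g_{J(t)}$; as a composition of real analytic maps this is a real analytic family of metrics as in \Cref{def:analyticfamilymetrics}.

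The hypotheses of \Cref{cor:nondegeneratefamily} then need to be verified for $(g_t, \rho_t)_{t \in D_\epsilon}$. Each $\rho_t$ is Hitchin, hence reductive with $Z_G(\im \rho_t) = 0$; and because Hitchin representations are Anosov by Labourie's theorem, while continuous families of Anosov representations are uniformly free and proper by \cite[Theorem 7.33]{KLP}, the family $(\rho_t)_t$ satisfies \Cref{def:uniformlyfreeandproper}. Applying the corollary produces a continuous map $F_\phi \colon D_\epsilon \to C^{k,\alpha}(\widetilde{S}, X)$ whose value at $t$ is the unique $\rho_t$-equivariant harmonic map $(\widetilde{S}, g_{J(t)}) \to X$ and whose restriction $F_\phi(\cdot)|_\Omega$ is real analytic. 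Since harmonicity on a surface depends only on the conformal class, $F_\phi(t)$ is independent of the chosen representative metric and depends only on $\phi(t)$, so $F_\phi = F \circ \phi$.

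It remains to patch the local descriptions. Uniqueness of the harmonic map in its homotopy class — guaranteed by the triviality of the centraliser via \Cref{lem:nondegeneracy} — shows that the various $F_\phi$ agree on overlaps and assemble into the single global map $F$. As continuity and the real analyticity of $F_\phi(\cdot)|_\Omega$ hold in every chart, and the transition maps of the product are real analytic, the claimed global continuity of $F$ and real analyticity of $F(\cdot,\cdot)|_\Omega$ follow. I expect the only real work to lie in this transfer of hypotheses through a chart — in particular confirming that the single-parameter family extracted from a chart of the product is genuinely uniformly free and proper — rather than in any further analysis, since the analytic substance is already carried by \Cref{cor:nondegeneratefamily}.
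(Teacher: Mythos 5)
Your proposal is correct and follows essentially the same route as the paper: the paper likewise works in charts of $\mathcal{T}(S)\times\HomHit(\Gamma,G)$, uses \cite{Wolf} to choose representative metrics real analytically, invokes Labourie and \cite[Theorem 7.33]{KLP} to verify that chart-induced families of Hitchin representations are uniformly free and proper with reductive members and trivial centralisers, and then applies \Cref{cor:nondegeneratefamily}. Your additional remarks on conformal invariance of harmonicity and on patching via uniqueness only make explicit what the paper leaves implicit.
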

We discuss three corollaries to this result.

First we observe that the family of harmonic maps given by $F$ can also be interpreted as a single map with the universal Teichm\"uller curve as domain. Namely let $\Xi(S)$ be the universal Teichm\"uller curve of $S$. It is a trivial fibre bundle over $\mathcal{T}(S)$ with fibres homeomorphic to $S$. It is equipped with a complex structure such that the fibre $\Xi(S)_J$ over $J \in \mathcal{T}(S)$ together with the marking provided by the trivialization $\Xi(S)_J \cong \mathcal{T}(S)\times S$ determines the point $J$ in Teichm\"uller space (see \cite[section 6.8]{Hubbard}). The universal cover $\widetilde{\Xi}(S)$ is a trivial fibre bundle over $\mathcal{T}(S)$ with fibres homeomorphic to $\widetilde{S}$. Let $F' \colon \widetilde{\Xi}(S)\times \HomHit(\Gamma,G) \to X$ be the map which on each fibre $\widetilde{\Xi}(S)_J \times\{\rho\} \cong \widetilde{S}$ is given by the $\rho$-equivariant harmonic map $(\widetilde{\Xi}(S)_J, J) \to X$. It follows from \Cref{prop:realanalyticityhitcin} that this map is real analytic.  
\begin{corollary}\label{cor:nonimmersionisrealanalyticvar}
The set
\begin{align*}
I &= \{((J,x),\rho) \in \widetilde{\Xi}(S)\times \HomHit(\Gamma,G) \mid\\ &F'(J,\cdot, \rho) \colon \widetilde{\Xi}(S)_J \to X \text{ is not an immersion at } x\}
\end{align*}
is a real analytic subvariety of $\widetilde{\Xi}(S)\times \HomHit(\Gamma,G)$.
\end{corollary}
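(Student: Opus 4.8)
The plan is to characterise the non-immersion condition as a rank deficiency of the fibrewise differential of $F'$, and then to exhibit this rank condition locally as the vanishing of finitely many real analytic functions, namely the $2\times 2$ minors of that differential, so that $I$ becomes by definition a real analytic subvariety.

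First I would set up the fibrewise differential. Let $V \to \widetilde{\Xi}(S)$ denote the vertical tangent bundle, i.e. the bundle of vectors tangent to the fibres of the projection $\widetilde{\Xi}(S)\to \mathcal{T}(S)$; since $\widetilde{\Xi}(S)$ is a real analytic fibre bundle this is a real analytic vector bundle of rank two. Writing $\pr$ for the projection $\widetilde{\Xi}(S)\times \HomHit(\Gamma,G)\to \widetilde{\Xi}(S)$, the derivative of $F'$ in the fibre directions defines a section $d^vF'$ of the real analytic bundle $\Hom(\pr^* V, (F')^*TX)$. Because $F'$ is real analytic and differentiation preserves real analyticity, this section is real analytic. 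By definition the fibre map $F'(J,\cdot,\rho)\colon \widetilde{\Xi}(S)_J\to X$ fails to be an immersion at $x$ precisely when $d^vF'$ has nontrivial kernel at $((J,x),\rho)$, i.e. when $\operatorname{rank} d^vF' < 2$ there.

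Next I would pass to local coordinates. Fix a point of $\widetilde{\Xi}(S)\times\HomHit(\Gamma,G)$ and choose real analytic local trivialisations of $V$ and of $(F')^*TX$ over a neighbourhood $U$; with respect to these the section $d^vF'$ is represented by a matrix-valued real analytic map $A\colon U\to \R^{n\times 2}$, where $n = \dim X$. The condition $\operatorname{rank}A<2$ is equivalent to the simultaneous vanishing of all $2\times 2$ minors of $A$. Each such minor is a real analytic function on $U$, so $I\cap U$ is the common zero set of finitely many real analytic functions and hence a real analytic subvariety of $U$. I would then check that these local descriptions glue: under a change of trivialisations $A$ is replaced by $PAQ$ with $P,Q$ invertible and real analytic, and the minors of $PAQ$ are real analytic combinations of those of $A$ and conversely, so the common zero locus is independent of the choices and the locally defined subvarieties agree on overlaps.

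I would also record that $I$ is $\Gamma$-invariant: equivariance of $F'$ identifies $d^vF'$ at $\gamma x$ with the composite of $d^vF'$ at $x$ with the differential of the deck transformation and of the isometry $\rho(\gamma)$, both invertible, so the rank is unchanged. The only genuine point requiring care is that differentiation along the fibres of the real analytic map $F'$ yields a real analytic section of $\Hom(\pr^*V,(F')^*TX)$; this is immediate once $F'$ is known to be real analytic, and everything else rests on the elementary fact that the matrices of rank less than two form a real algebraic determinantal set cut out by the $2\times2$ minors. I do not anticipate a serious obstacle here beyond the bookkeeping of the vertical differential and its local matrix representation.
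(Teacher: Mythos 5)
Your proposal is correct and follows essentially the same route as the paper: both identify non-immersion with rank deficiency of the fibrewise differential of $F'$ and exhibit that locus as the zero set of real analytic functions built from that differential. The only difference is cosmetic --- the paper packages the condition into a single global function (a determinant taken with respect to chosen inner products on $T_x(\widetilde{\Xi}(S)_J)$ and $T_{F'(J,x,\rho)}X$, in effect the Gram determinant, which by Cauchy--Binet is the sum of squares of your $2\times 2$ minors), whereas you work with the local minors in trivialisations and glue, which is an equally valid and if anything more careful formulation.
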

It is conjectured (see for example \cite[Conjecture 9.3]{Qiongling}) that equivariant harmonic maps associated to Hitchin representations are immersions which would correspond to the set $I$ being empty.
\begin{proof}
We equip $\Xi(S)$ with a choice of real analytic metric. Given a point $((J,x),\rho) \in \widetilde{\Xi}(S) \times \HomHit(\Gamma,G)$ we consider the derivative of $F'$ in the fibre direction
\[
dF(J,\cdot,\rho) \colon T_x(\widetilde{\Xi}(S)_J) \to \text{im} dF(J,\cdot, \rho).
\]
Because the spaces $T_x(\widetilde{\Xi}(S)_J)$ and $\text{im} dF(J,\cdot,\rho) \subset T_{F'(J,x,\rho)}X$ are equipped with inner products we can consider the determinant of this map. We let $d \colon \widetilde{\Xi}(S)\times \HomHit(\Gamma,G)\to\R$ be the map which at a point $((J,x),\rho)$ is given by the determinant of the above map. Because $F'$ is real analytic the map $d$ is real analytic as well. We observe that $I = d^{-1}(0)$ from which the result follows.
\end{proof}
In a similar vein we also have the following corollary.
\begin{corollary}\label{cor:minimalsurfsisrealanalyticvar}
The set
\begin{align*}
T&=\{(J,\rho) \in \mathcal{T}(S)\times \HomHit(\Gamma,G) \mid\\ &(S,J) \text{ can be realised in } X/\rho(\Gamma) \text{ as a branched minimal surface}\}
\end{align*}
is a real analytic subvariety of $\mathcal{T}(S)\times \HomHit(\Gamma,G)$.
\end{corollary}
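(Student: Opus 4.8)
The plan is to realise $T$ as the zero locus of a single real analytic function built from the Hopf differential of the equivariant harmonic map, in parallel with the determinant construction used for \Cref{cor:nonimmersionisrealanalyticvar}.

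First I would invoke the classical equivalence between minimality and conformality for maps out of a surface. A harmonic map $f \colon (S,J) \to (N,h)$ from a closed Riemann surface is a branched minimal immersion exactly when it is weakly conformal, and since $f$ is harmonic its Hopf differential $\Phi(f) = (f^\ast h)^{2,0}$ is a holomorphic quadratic differential on $(S,J)$ that vanishes identically if and only if $f$ is weakly conformal. For a Hitchin representation $\rho$ the map $F(J,\rho)$ is the unique $\rho$-equivariant harmonic map and descends to the unique harmonic map $(S,J) \to X/\rho(\Gamma)$ in its homotopy class (uniqueness by \Cref{lem:nondegeneracy}, as Hitchin representations are reductive with trivial centraliser). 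Any branched minimal surface realising $(S,J)$ in $X/\rho(\Gamma)$ in this class is harmonic and conformal, so by uniqueness must coincide with $F(J,\rho)$. Hence $(J,\rho) \in T$ if and only if the Hopf differential of $F(J,\rho)$ vanishes identically.

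Next I would establish that this Hopf differential depends real analytically on $(J,\rho)$ and encode its vanishing in one function. Working over the universal Teichm\"uller curve as in \Cref{cor:nonimmersionisrealanalyticvar}, the complex structure on the fibre $\Xi(S)_J$, and hence the $(1,0)$-projection defining $\Phi(J,\rho)$, varies real analytically with $J$, while $F'$ is real analytic; choosing the real analytic family of hyperbolic metrics $g_J$ furnished by \cite{Wolf} yields a pointwise norm $\norm{\Phi(J,\rho)}_{g_J}$ that is real analytic on $\widetilde{\Xi}(S) \times \HomHit(\Gamma,G)$. I would then set
\[
N(J,\rho) = \int_{S} \norm{\Phi(J,\rho)}^2_{g_J} \, \vol_{g_J},
\]
the integral being over the fibre $\Xi(S)_J \cong S$. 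As the integrand is real analytic and $S$ is compact, $N$ is a real analytic function on $\mathcal{T}(S) \times \HomHit(\Gamma,G)$; and since $\norm{\Phi}^2_{g_J} \geq 0$ with equality precisely where $\Phi$ vanishes, we have $N(J,\rho) = 0$ if and only if $\Phi(J,\rho) \equiv 0$. Thus $T = N^{-1}(0)$, and as the zero set of a real analytic function it is a real analytic subvariety.

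The step I expect to be the main obstacle is the real analyticity of the Hopf differential. One must verify that $\Phi$ varies real analytically even though the conformal data on the fibres — the $(1,0)$-projection, the canonical bundle $K_J$, and the metric $g_J$ with which $\Phi$ is measured — all move with $J$; equivalently, that $\Phi$ is a real analytic section of the (holomorphic, hence real analytic) bundle of holomorphic quadratic differentials over $\mathcal{T}(S)$. One must also check that the finite $C^{k,\alpha}$ regularity of $F(J,\rho)$ (for $k \geq 1$) suffices to form $\Phi$ from first fibre derivatives, and that integration of the jointly real analytic integrand over the compact fibre preserves real analyticity in the parameters.
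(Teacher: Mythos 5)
Your proposal is correct and follows essentially the same route as the paper: both characterise membership in $T$ by the vanishing of the Hopf differential $\phi_{J,\rho} = (F(J,\rho)^* m)^{2,0}$ of the unique equivariant harmonic map, and both exhibit $T$ as the zero set of the real analytic function given by the $L^2$-norm of $\phi_{J,\rho}$ over the compact fibre (your $N$ is the paper's $V$ up to normalisation of the pointwise norm), with real analyticity supplied by \Cref{prop:realanalyticityhitcin}. Your added remarks on uniqueness forcing any conformal harmonic realisation to coincide with $F(J,\rho)$, and on why the $L^2$- rather than $L^1$-norm is the right choice, are consistent with (and slightly more explicit than) the paper's argument.
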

\begin{proof}
For $J\in \mathcal{T}(S)$ and $\rho\in \HomHit(\Gamma,G)$ we consider the Hopf differential of the harmonic map $F(J,\rho) \colon (\widetilde{S}, J) \to X$, namely $\phi_{J,\rho} = (F(J,\rho)^* m)^{2,0}$. Here $m$ is the Riemannian metric of the symmetric space $X$. The Hopf differential is a holomorphic quadratic differential on $(\widetilde{S}, J)$ which vanishes if and only if the harmonic map $F(J,\rho)$ is a (branched) minimal surface. The Hopf differential $\phi_{J,\rho}$ is $\Gamma$-invariant and descends to $S$ since $F(J,\rho)$ is $\rho$-equivariant. Consider the function $V \colon \mathcal{T}(S)\times \HomHit(\Gamma,G) \to \R$ given by the $L^2$-norm of $\phi_{J,\rho}$, namely
\[
V(J,\rho) = \int_S \frac{\abs{\phi_{J,\rho}}^2}{\sqrt{\det g_J}} \abs{dz}^2.
\]
Here $g_J$ is a metric in the conformal class of $J$ depending real analytically on $J$. It follows from \Cref{prop:realanalyticityhitcin} that this function is real analytic (it is for this reason that we choose the $L^2$-norm rather than the $L^1$-norm). The statement of the corollary follows from $T = V^{-1}(0)$.
\end{proof}

The space of Fuchsian representations $\Gamma \to \SL(2,\R)$ can be included in $\HomHit(\Gamma,G)$ in the following way. There exists a representation $\iota \colon \SL(2,\R) \to G$ that is unique up to conjugation. Then to a Fuchsian representation $\rho_0 \colon \Gamma \to \SL(2,\R)$ we associate a, so called, quasi-Fuchsian representation $\rho = \iota \circ \rho_0$ which lies in $\HomHit(\Gamma,G)$. This inclusion descends to the natural inclusion of Teichm\"uller space into the Hitchin component. A quasi-Fuchsian representation stabilises the totally geodesically embedded copy of $\H^2$ in $G/K$ given by the inclusion $\iota' \colon \SL(2,\R)/\SO(2) \to G/K= X$ that is induced by $\iota$. By uniqueness of equivariant harmonic maps we have that the harmonic map $(\widetilde{S},J) \to X$ equivariant for $\rho = \iota \circ \rho_0$ is given by the composition $\iota'\circ f_0$ where $f_0 \colon (\widetilde{S},J) \to \H^2$ is the unique $\rho_0$-equivariant map. It follows from \cite[Theorem 11]{Sampson} that $f_0$ is a diffeomorphism. Hence equivariant harmonic maps associated to quasi-Fuchsian representations are immersions. Because being an immersion is an open condition we immediately obtain the following corollary to \Cref{prop:realanalyticityhitcin}
\begin{corollary}
There exists an open neighbourhood of 
\[
\mathcal{T}(S)\times \{\text{quasi-Fuchsian representations}\} \subset \mathcal{T}(S)\times \HomHit(\Gamma,G)
\]
such that for any pair $(J,\rho)$ in this neighbourhood the $\rho$-equivariant harmonic map $(\widetilde{S},J) \to X$ is an immersion.
\end{corollary}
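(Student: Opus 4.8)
The plan is to promote the two observations recalled immediately before the statement---that the harmonic maps attached to quasi-Fuchsian representations are immersions, and that being an immersion is an open condition---into an honest open neighbourhood. The one genuine subtlety is that the domain $\widetilde{S}$ is non-compact, so pointwise openness of the immersion condition does not by itself produce a neighbourhood of a whole slice $\{J\}\times\{\rho\}$, let alone of the entire quasi-Fuchsian locus; I would dispose of this by using equivariance to descend the problem to the compact fibre and then invoking properness of the bundle projection.

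First I would record that the immersion condition is $\Gamma$-invariant. Since each $F(J,\rho)$ is $\rho$-equivariant, the deck transformation by $\gamma$ on $(\widetilde{S},J)$ is an isometry intertwined by the isometry $\rho(\gamma)$ of $X$, so the rank of the differential at $x$ and at $\gamma x$ coincide; in particular $F(J,\rho)$ is an immersion on all of $\widetilde{S}$ if and only if it is an immersion on a fundamental domain, equivalently on the compact fibre $\Xi(S)_J\cong S$. Next I would reuse the real analytic function $d$ built in the proof of \Cref{cor:nonimmersionisrealanalyticvar}, whose value at $((J,x),\rho)$ is the determinant of the fibrewise derivative and whose zero locus $I$ is exactly the set of non-immersion points. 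As $d$ measures a metric quantity (the area element of the pullback metric) it is invariant under the isometric deck action, hence descends to a continuous function $\bar d$ on $\Xi(S)\times\HomHit(\Gamma,G)$ whose zero set $\bar I=\bar d^{\,-1}(0)$ is closed.

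Finally---this is the step that replaces ``open condition'' by a genuine neighbourhood---I would consider the projection $\pi\colon \Xi(S)\times\HomHit(\Gamma,G)\to \mathcal{T}(S)\times\HomHit(\Gamma,G)$. Because $\Xi(S)\to\mathcal{T}(S)$ is a fibre bundle with compact fibre $S$, the map $\pi$ is proper and therefore closed, so $\pi(\bar I)$ is a closed subset of $\mathcal{T}(S)\times\HomHit(\Gamma,G)$. By the first step $\pi(\bar I)$ is precisely the set of pairs $(J,\rho)$ for which the $\rho$-equivariant harmonic map $(\widetilde{S},J)\to X$ fails to be an immersion at some point, so its complement $U$ is open. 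The paragraph preceding the statement exhibits $\mathcal{T}(S)\times\{\text{quasi-Fuchsian representations}\}$ as a subset of $U$, and $U$ is then the required open neighbourhood. I expect the only real work to be in justifying this last reduction, i.e.\ that compactness of the fibres $S$ (properness of $\pi$) is exactly what converts the fibrewise, pointwise openness of the immersion condition into a uniform neighbourhood in the parameter space---once the function $d$ of \Cref{cor:nonimmersionisrealanalyticvar} is available, the remainder is a topological formality.
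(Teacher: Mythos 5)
Your argument is correct and follows the same route the paper intends: the paper derives this corollary in one line from the preceding paragraph (quasi-Fuchsian harmonic maps are immersions) plus the remark that being an immersion is an open condition, and your descent to the compact fibres of $\Xi(S)$ via equivariance, followed by properness of the projection to $\mathcal{T}(S)\times\HomHit(\Gamma,G)$, is precisely the standard way to make that one-liner rigorous on the non-compact domain $\widetilde{S}$. No gaps; you have simply supplied the detail the paper elides.
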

\section{Proof of the main result}

As in the proof of Eells and Lemaire in \cite{EellsLemaire}, our main analytical tool will be the implicit function theorem for maps between Banach manifolds. The main difficulty to overcome is that a priori the equivariant harmonic maps are not elements of the same space of mappings. Namely, if $(\rho_t)_t$ is a family of representations then a $\rho_t$-equivariant map is an element of the space $C^{k,\alpha}(M, X/\rho_t(\Gamma))$. Since the target manifold is different for each $t$ these spaces are not equal (although they are likely to be diffeomorphic). Our aim is to modify these maps so that they can be seen as elements of a single mapping space. This will be achieved by means of a family of deformation maps which intertwine the representations $\rho_0$ and $\rho_t$. By composing with these deformation maps we can view each $\rho_t$-equivariant map as element of (a subset of) $C^{k,\alpha}(M, X/\rho_0(\gamma))$.

We first fix some notation. We let $(\rho_t)_{t\in D_\epsilon}$ be a real analytic family of representations that is uniformly free and proper. We denote $X_\epsilon = X\times D_\epsilon$ and by $\alpha \colon \Gamma \times X_\epsilon \to X_\epsilon, \alpha(\gamma)(x,t) = (\rho_t(\gamma)x,t)$ the natural action induced by $(\rho_t)_t$. We fix a base point $o\in X$ of the symmetric space and denote by $U_R = \cup_{\gamma\in \Gamma} B(\rho_0(\gamma)o, R)$ the $R$-neighbourhood of the $\rho_0(\Gamma)$-orbit of $o$.

Our deformation maps will be provided by the following proposition.
\begin{proposition}\label{prop:deformationmaps}
For every $R>0$ there exists a $\delta = \delta(R)>0$ smaller then $\epsilon$ and family of maps $(\Phi_t \colon U_R \to X)_{t\in D_\delta}$ satisfying the following properties:
\begin{enumerate}[label=\roman*., ref = \roman*]
\item\label[Property]{property:realanalytic} The induced map $U_R \times D_\delta \to X \colon (x, t) \mapsto \Phi_t(x)$ is real analytic.
\item\label[Property]{property:diffeos} For each $t\in D_\delta$ the set $\Phi_t(U_R)$ is open and $\Phi_t \colon U_R \to \Phi_t(U_R)$ is a real analytic diffeomorphism.
\item\label[Property]{property:identity} $\Phi_0 = \id \colon U_R \to U_R$.
\item\label[Property]{property:intertwining} For each $t\in D_\delta$ the map $\Phi_t$ intertwines the actions of $\rho_0$ and $\rho_t$ i.e. \makebox{$\rho_t(\gamma) \circ \Phi_t = \Phi_t \circ \rho_0(\gamma)$} for $\gamma\in \Gamma$.
\end{enumerate}
\end{proposition}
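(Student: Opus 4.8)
The plan is to pass to the quotient of the product space $X_\epsilon$ by the $\Gamma$-action $\alpha$ and to realise the deformation maps as the lift of a local trivialisation of this quotient near $t=0$. Since the family is uniformly free and proper (\Cref{def:uniformlyfreeandproper}) and $\Gamma$ acts by real analytic diffeomorphisms, the quotient $\mathcal{X} := X_\epsilon/\Gamma$ is a real analytic manifold, the projection $q \colon X_\epsilon \to \mathcal{X}$ is a real analytic covering map, and the map induced by the projection $X_\epsilon \to D_\epsilon$ descends to a real analytic submersion $\pi \colon \mathcal{X} \to D_\epsilon$ whose fibre over $t$ is $X/\rho_t(\Gamma)$. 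The key observation is that $U_R$ is $\rho_0(\Gamma)$-invariant, so its image $V_0 := q(U_R \times \{0\})$ is a relatively compact open subset of the fibre $\pi^{-1}(0) = X/\rho_0(\Gamma)$; write $K = \overline{V_0}$, a compact set.

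First I would trivialise $\pi$ near $t=0$ over $K$. Equip $\mathcal{X}$ with a real analytic Riemannian metric (such a metric exists, e.g. by pulling back the Euclidean metric under a real analytic embedding into some $\R^N$) and let $\widetilde{V}_1,\dots,\widetilde{V}_n$ be the horizontal lifts of the coordinate vector fields $\partial/\partial t_i$ on $D_\epsilon$; these are real analytic vector fields on $\mathcal{X}$. For $t\in D_\epsilon$ let $\psi^t$ denote the time-one flow of $\sum_i t_i\widetilde{V}_i$. Because $K$ is compact there is a $\delta=\delta(R)>0$ smaller than $\epsilon$ such that $\psi^t$ is defined on a neighbourhood of $K$ for all $t\in D_\delta$, and by real analytic dependence of solutions of ordinary differential equations on initial conditions and parameters the map $\Psi \colon V_0 \times D_\delta \to \mathcal{X}$, $(p,t)\mapsto \psi^t(p)$, is real analytic. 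Since $\sum_i t_i\widetilde{V}_i$ projects under $d\pi$ to the constant field $\sum_i t_i\,\partial/\partial t_i$, the flow $\psi^t$ sends the part of $\pi^{-1}(0)$ on which it is defined into $\pi^{-1}(t)$; hence $\Psi(\cdot,0)=\id$ and each $\Psi(\cdot,t)$ is a real analytic diffeomorphism of $V_0$ onto an open subset of $\pi^{-1}(t)$.

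It remains to lift $\Psi$ to the covering and to read off the four properties. Consider the real analytic map $\Theta \colon U_R \times D_\delta \to \mathcal{X}$ given by $\Theta(x,t)=\Psi(q(x,0),t)$. At $t=0$ it coincides with $x\mapsto q(x,0)$, which is covered by the inclusion $x\mapsto (x,0)$ of $U_R$ into $X_\epsilon$. Since $D_\delta$ is contractible, the homotopy lifting property of the covering $q$ provides a unique real analytic lift $\widehat{\Phi} \colon U_R \times D_\delta \to X_\epsilon$ of $\Theta$ with $\widehat{\Phi}(x,0)=(x,0)$; as $q$ and $\Theta$ respect the projection to $D_\epsilon$ we may write $\widehat{\Phi}(x,t)=(\Phi_t(x),t)$, which defines the maps $\Phi_t \colon U_R \to X$. \Cref{property:identity} is then immediate, and \Cref{property:realanalytic} follows because $q$ is a local real analytic diffeomorphism. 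For \Cref{property:intertwining} I would observe that, for fixed $\gamma$, both $(x,t)\mapsto \widehat{\Phi}(\rho_0(\gamma)x,t)$ and $(x,t)\mapsto \alpha(\gamma)\widehat{\Phi}(x,t)$ are lifts of the same map $\Theta(\rho_0(\gamma)x,t)=\Theta(x,t)$ (using $q(\rho_0(\gamma)x,0)=q(x,0)$) which agree at $t=0$; uniqueness of lifts forces them to be equal, which is exactly the intertwining relation. Finally, for \Cref{property:diffeos}, each $\Phi_t$ is a local diffeomorphism, being locally a composition of the local diffeomorphisms $q$, $\Psi(\cdot,t)$ and the covering projection at $t=0$, so that $\Phi_t(U_R)$ is open; and it is injective because $\Phi_t(x_1)=\Phi_t(x_2)$ forces $q(x_1,0)=q(x_2,0)$, hence $x_2=\rho_0(\gamma)x_1$ for some $\gamma$, whereupon the intertwining relation and the freeness of the $\rho_t$-action give $\gamma=e$.

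The main obstacle is to ensure that every step can be carried out \emph{real analytically}: that $\mathcal{X}$ is a real analytic manifold (which is precisely what the uniformly free and proper hypothesis supplies), that a real analytic connection exists on $\pi$, and that the flow depends real analytically on the parameter $t$. Coupled with this is the short-time existence of the flow on the compact set $K$, which is exactly what produces the constant $\delta=\delta(R)$, and the freeness built into \Cref{def:uniformlyfreeandproper}, which is what yields the injectivity of $\Phi_t$.
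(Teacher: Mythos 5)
Your proof is correct and follows the same underlying strategy as the paper's: both are Ehresmann-fibration-theorem arguments in which one flows along real analytic vector fields on (a quotient of) $X\times D_\epsilon$ that project to the coordinate fields on $D_\epsilon$, and the paper explicitly acknowledges this provenance. The genuine difference lies in how the real analytic vector fields are produced and where the argument takes place. The paper works upstairs on $X_\epsilon$: it builds smooth $\alpha(\Gamma)$-invariant vector fields by summing a cut-off multiple of $\partial/\partial t_i$ over the group, pushes them to the quotient, approximates them there in $C^0$ by real analytic vector fields (citing Whitney and Royden), pulls back, and renormalises so that $d\pi(\xi_i)=\partial/\partial t_i$ holds exactly; it then composes the $n$ coordinate flows and never needs to re-lift through the covering because everything is already equivariant. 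You instead work downstairs on $\mathcal{X}=X_\epsilon/\Gamma$ from the start, take horizontal lifts of $\partial/\partial t_i$ with respect to a real analytic Riemannian metric --- whose existence requires Grauert's embedding theorem, a heavier input than the approximation results the paper cites, but one that removes the approximate-then-correct step --- flow $\sum_i t_i\widetilde V_i$ for unit time, and recover equivariance at the end by uniqueness of lifts through the covering $q$ (note $U_R$ need not be connected, but your lift is pinned down on all of $U_R\times\{0\}$, which meets every component, so uniqueness still applies). Both routes are sound; yours trades the paper's explicit invariant construction plus approximation for covering-space bookkeeping, and your intertwining and injectivity arguments (uniqueness of lifts, then freeness of the $\rho_t$-action) correctly replace the paper's direct verification that $\Psi$ intertwines the two actions and can be shrunk to an injection.
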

The content of this proposition is closely related to Ehresmann's fibration theorem. In fact, when the actions of the representations $\rho_t$ on $X$ are cocompact \Cref{prop:deformationmaps} can be obtained from it. Consequently the proof of \Cref{prop:deformationmaps} is along the same lines as the proof of the fibration theorem.

We denote by $\pr_X \colon X_\epsilon\to X$ and $\pi \colon X_\epsilon \to D_\epsilon$ the projections onto the first and second factor of $X_\epsilon = X \times D_\epsilon$ respectively. By $(t_1,...,t_n)$ we denote the coordinates on $D_\epsilon$ and also the coordinates on the $D_\epsilon$ factor in $X_\epsilon$. So in this notation we have $d\pi(\ddt[i](x,t)) = \ddt[i](t)$.

We first prove the following lemma.
\begin{lemma}\label{lem:constructionvectorfields}
Let $R>0$. On an $\alpha(\Gamma)$-invariant neighbourhood of $U_R\times\{0\}$ in $X_\epsilon$ there exist $\alpha(\Gamma)$-invariant real analytic vector fields $\xi_i$ (for $i=1,..., n$) that satisfy $d\pi(\xi_i(x,t)) = \ddt[i](t)$.
\end{lemma}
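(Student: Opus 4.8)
The plan is to construct the desired vector fields $\xi_i$ by first building them locally, near the single orbit point $o$, and then propagating them over the whole orbit $U_R \times \{0\}$ using the group action to enforce $\alpha(\Gamma)$-invariance. The key tension is that we want vector fields that (a) lift $\partial/\partial t_i$ under the projection $\pi$, (b) are $\alpha(\Gamma)$-invariant, and (c) are real analytic on a neighbourhood of the orbit. Requirement (b) is the source of the main difficulty, because the action $\alpha(\gamma)$ depends on $t$ through $\rho_t(\gamma)$, so a naively chosen lift will not be preserved when we push it around the orbit by different group elements.

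First I would work locally. Near the point $(o,0) \in X_\epsilon$, choose any real analytic local lift of the coordinate vector fields: since $\pi$ is a real analytic submersion, I can pick real analytic vector fields $\eta_i$ defined on a neighbourhood of $(o,0)$ with $d\pi(\eta_i) = \partial/\partial t_i$, for instance the coordinate fields $\partial/\partial t_i$ themselves in a product chart. The point is only that such local lifts exist and can be taken real analytic. The nontrivial step is then to globalize these over the orbit in an $\alpha(\Gamma)$-invariant way.

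The propagation step is where I expect the real work to lie. The natural idea is to define $\xi_i$ on a neighbourhood of $\alpha(\gamma)(o,0)$ by pushing $\eta_i$ forward via $d\alpha(\gamma)$; invariance then holds by construction. The obstruction is consistency: if two group elements $\gamma_1,\gamma_2$ carry the base neighbourhood to overlapping regions, the two pushforwards $\alpha(\gamma_1)_* \eta_i$ and $\alpha(\gamma_2)_* \eta_i$ must agree on the overlap. Because the action is free and proper on $X_\epsilon$ (this is exactly the uniformly free and proper hypothesis, \Cref{def:uniformlyfreeandproper}), by shrinking to a small enough $\alpha(\Gamma)$-invariant neighbourhood of $U_R \times \{0\}$ I can arrange that the translates of a suitably small base neighbourhood are disjoint, or more precisely that the quotient $X_\epsilon/\alpha(\Gamma)$ is a manifold near the image of the orbit and that $\pi$ descends to it. This lets me first build a vector field downstairs on the quotient that lifts $\partial/\partial t_i$ (using that the quotient map is a real analytic submersion-type chart away from the fixed-point set, which is empty by freeness), and then lift it back up to an $\alpha(\Gamma)$-invariant field $\xi_i$ upstairs. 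Properness and freeness are precisely what make the quotient construction legitimate and guarantee the absence of identifications that would destroy the lift.

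The one feature requiring care is that we must control the neighbourhood uniformly along the \emph{noncompact} orbit $U_R \times \{0\}$: $M$ is closed but the orbit in $X_\epsilon$ is infinite, so I cannot simply invoke compactness to shrink. The correct remedy is to exploit the equivariant structure directly: construct $\xi_i$ in a genuinely $\alpha(\Gamma)$-invariant fashion from the quotient (where the relevant neighbourhood of the compact image of the orbit \emph{is} finite), so that the choice of $\delta$ and of the invariant neighbourhood is made once, downstairs, and then automatically transported upstairs. The real analyticity of $\xi_i$ follows because the quotient chart, the lift, and the action maps $\alpha(\gamma)$ are all real analytic (the latter by \Cref{def:analyticfamilyrepresentations}), and composition and pullback of real analytic maps are real analytic. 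Finally $d\pi(\xi_i) = \partial/\partial t_i$ holds because $\pi$ is $\alpha(\Gamma)$-invariant, so it descends to the quotient, and our downstairs field was chosen to lift $\partial/\partial t_i$ there. The main obstacle, to summarize, is organizing the invariant extension so that freeness and properness do all the gluing work while retaining real analyticity and the uniform control needed along the noncompact orbit.
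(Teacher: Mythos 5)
Your overall architecture is sound and close in spirit to the paper's: you correctly identify that invariance and uniformity along the noncompact orbit should be obtained by descending to the quotient $X_\epsilon/\alpha(\Gamma)$, which is a real analytic manifold by the uniformly free and proper hypothesis and in which the image of $U_R\times\{0\}$ is precompact. The paper uses the quotient for exactly this purpose. However, there is a genuine gap at the crucial step: you assert that one can ``build a vector field downstairs on the quotient that lifts $\partial/\partial t_i$'' and that real analyticity ``follows because the quotient chart, the lift, and the action maps are all real analytic.'' Locally this is fine (take $\ddt[i]$ in a product chart), but to get a single lift on a whole neighbourhood of the compact image of the orbit you must glue local lifts, and the standard gluing device --- a partition of unity --- does not exist in the real analytic category. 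Composition and pullback of real analytic maps being real analytic does not rescue you here, because no global real analytic lift has been produced to compose with. This is precisely the difficulty the lemma is really about, and your proposal passes over it.

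The paper resolves it in two moves that are absent from your write-up. First, it constructs an explicitly $\alpha(\Gamma)$-invariant \emph{smooth} lift by equivariant averaging: take a bump function $\varphi$ cutting off at distance $R$ from $o$, set $\eta_i(x,t)=\varphi(d(o,x))\,\ddt[i]$, and form $\xi_i'=\sum_{\gamma\in\Gamma}(\alpha(\gamma))_*\eta_i$, the sum being locally finite by properness; then $d\pi(\xi_i')=s\,\ddt[i]$ with $s\geq 1$ near $U_R\times\{0\}$. Second, it descends $\xi_i'$ to the quotient and invokes the Whitney--Royden theorem to approximate it in $C^0$ on a neighbourhood of the compact image by a \emph{real analytic} vector field; pulling back gives an invariant real analytic $\xi_i''$ with $d\pi(\xi_i'')=s_i'\,\ddt[i]$, $s_i'>0$, and one normalizes $\xi_i=\xi_i''/s_i'$ to restore the exact projection property. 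If you want to avoid the approximation theorem, you could instead complete your plan by putting a real analytic Riemannian metric on the quotient (which exists, e.g.\ via Grauert's embedding theorem) and taking the horizontal lift of $\ddt[i]$ with respect to the orthogonal complement of the vertical distribution; but some such device is needed, and as written your proof does not supply one.
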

\begin{proof}
It is possible to give a more or less explicit construction for such vector fields. However proving they are indeed real analytic is rather cumbersome. Instead we opt to explicitly construct smooth vector fields which we then approximate by real analytic ones.

We let $\varphi \colon [0,\infty) \to [0,1]$ be a smooth function satisfying $\varphi\vert_{[0,R]} \equiv 1$ and $\varphi\vert_{[R+1, \infty)} \equiv 0$. For $i=1,...,n$ we define smooth vector fields $\eta_i$ on $X_\epsilon$ by
\[
\eta_i(x,t) = \varphi(d(o,x)) \cdot \ddt[i].
\]
Now let
\[
\xi_i' = \sum_{\gamma\in \Gamma} (\alpha(\gamma))_* \eta_i.
\]
The sum on the right hand side is locally finite by the uniform properness assumption on $(\rho_t)_t$. Hence each $\xi_i'$ is $\alpha(\Gamma)$-invariant smooth vector field on $X_\epsilon$. We observe that $d\pi(\eta_i(x,t)) = s(x,t) \ddt[i]$ with 
\[
s(x,t) = \sum_{\gamma\in \Gamma} \varphi(d(o, \rho_t(\gamma)^{-1}x)).
\]
On $B(o,R)\times \{0\}$ we have that $s(x,t) \geq \varphi(d(o,x)) = 1$ and by $\alpha(\Gamma)$-invariance we have that $s \geq 1$ on $U_R\times\{0\}$. 

We now approximate the smooth vector fields $\xi_i'$ by real analytic ones. By the uniformly free and proper assumption on $(\rho_t)_{t}$ we have that $X_\epsilon/\alpha(\Gamma)$ is a real analytic manifold. The vector fields $\xi_i'$ descend to smooth vector fields. On compact subsets these vector fields can be approximated arbitrarily closely in $C^0$ norm by real analytic vector fields (see \cite{Whitney} and \cite{Royden}). The set $U_R\times\{0\}$ maps to a precompact subset of $X_\epsilon/\alpha(\Gamma)$. Hence by pulling back approximating vector fields we see that on a neighbourhood of $U_R\times \{0\}$ we can approximate $\xi_i'$ arbitrarily closely by $\alpha(\Gamma)$-invariant real analytic vector fields. Let $\xi_i''$ be such approximating vector fields. For some real analytic functions $s_i'$ we have $d\pi(\xi_i''(x,t)) = s_i'(x,t)\ddt[i]$. By choosing the approximating vector fields $\xi_i''$ close enough to $\xi_i'$ we can arrange that each $s_i'$ is close to $s$ and hence satisfies $s_i'>0$ on a neighbourhood of $U_R\times\{0\}$. For $i=1,...,n$ we can now define $\xi_i = \xi_i''/s'$.
\end{proof}
\begin{proof}[Proof of \Cref{prop:deformationmaps}]
Let $\xi_i$ for $i=1,..., n$ be the vector fields constructed in \Cref{lem:constructionvectorfields}. We denote by $\psi_i^s$ their flows which are defined on a neighbourhood of $U_R\times \{0\}$. We consider the maximal flow domain for a combination of these flows starting at points in $X\times \{0\}$ i.e. the set
\[
\Omega = \{(x,s)\in X\times \R^n \mid \psi_1^{s_1}\circ \cdots \circ \psi_n^{s_n}((x,0)) \text{ is defined}\}.
\]
This is an open set containing $U_R \times \{0\}$. On $\Omega$ we set
\[
\Psi(x,(s_1,...,s_n)) = \psi_1^{s_1}\circ \cdots \circ \psi_n^{s_n}((x,0)).
\]
Because $d\pi(\xi_i) = \ddt[i]$ (when defined) we see that 
\[
t \mapsto \pi\circ \Psi(x, (s_1,..., s_{i-1}, t, s_{i+1},..., s_n))
\]
is an integral line for the vector field $\ddt[i]$. Since these integral lines are unique and $\pi\circ \Psi(x,0) = 0$ we find $\pi\circ \Psi(x,s) = s$ when $(x,s)\in \Omega$ i.e. $\pi \circ \Psi = \pi$. 

Because the vector fields $\xi_i$ are $\alpha(\Gamma)$-invariant we observe for $\gamma\in \Gamma$ that
\begin{align*}
\Psi(\rho_0(\gamma)x, s) &= \psi_1^{s_1}\circ \cdots \circ \psi_n^{s_n}(\alpha(\gamma)(x,0)) \\&= \alpha(\gamma) [\psi_1^{s_1}\circ \cdots \circ \psi_n^{s_n}(x,0)] = \alpha(\gamma) \Psi(x,s)
\end{align*}
whenever both sides are defined. We define $\beta \colon \Gamma \times \Omega \to \Omega$ as an action of $\Gamma$ on $\Omega$ by $\beta(\gamma)(x,s) = (\rho_0(\gamma)x, s)$ which is the action of $\rho_0(\Gamma)$ on $X$ times the trivial action. By the above the we see that the set $\Omega$ is $\beta(\Gamma)$-invariant and $\Psi$ intertwines the $\beta$ and $\alpha$ actions i.e. $\alpha(\gamma) \circ \Psi = \Psi \circ \beta(\gamma)$. 

On $X\times\{0\} \cap \Omega$ the map $\Psi$ is simply the inclusion into $X_\epsilon$. Combined with the fact that $\pi\circ\Psi = \pi$ we see for each $(x,0) \in X\times \{0\} \cap \Omega$ the tangent map $d\Psi\vert_{(x,0)} \colon T_x X \times T_0\R^n \to T_x X \times T_0 D_\epsilon$ is the identity map. Hence we can shrink $\Omega$ to a smaller open neighbourhood of $U_R \times \{0\}$ such that $\Psi$ is a local diffeomorphism on $\Omega$. By shrinking $\Omega$ further we can also assume $\Psi$ to be injective. For if not then there exist two distinct sequences $(x_n,s_n), (x_n',s_n')\in \Omega$ satisfying $\Psi(x_n,s_n) = \Psi(x_n', s_n')$ with $s_n, s_n'\to 0$ and $x_n, x_n'$ converging to points $x$ and $x'$ in $U_R$. By $\pi\circ\Psi=\pi$ we see $s_n=s_n'$. By continuity $\Psi(x, 0) = \Psi(x',0)$ and because when restricted to $X\times\{0\}\cap \Omega$ the map $\Psi$ is an injection we must have $x=x'$. However this contradicts the fact that $\Psi$ is a local diffeomorphism. So we can arrange that $\Psi$ is a diffeomorphism onto its image. Since $\Psi$ intertwines $\beta$ and $\alpha$ this can be done such that $\Omega$ is still $\beta(\Gamma)$-invariant.

Since $\Omega$ is a neighbourhood of $U_R\times \{0\}$ we can, by compactness, find a $\delta>0$ such that $\overline{B(o,R)}\times D_\delta \subset \Omega$. By $\beta(\Gamma)$-invariance we then have $U_R \times D_\delta \subset \Omega$. We now define the family of deformation maps $\Phi_t \colon U_R \to X$ as $\Phi_t(x) = \pr_{X}\circ \Psi(x,t)$ for $t\in D_\delta$. We check that indeed $(\Phi_t)_{t\in D_\delta}$ satisfies Properties (\ref{property:realanalytic})-(\ref{property:intertwining}). Property (\ref{property:realanalytic}) follows since flows of real analytic vector fields are real analytic. Property (\ref{property:diffeos}) follows since $\Psi \colon \Omega \to \Psi(\Omega)$ is a diffeomorphism and satisfies $\pi \circ \Psi = \pi$ hence induces diffeomorphisms between the fibres $\pi^{-1}(t)\cap \Omega$ and $\pi^{-1}(t) \cap \Psi(\Omega)$. Property (\ref{property:identity}) follows from the fact that $\Psi$ restricted to $X\times\{0\}\cap \Omega$ is the inclusion map and Property (\ref{property:intertwining}) follows from the fact that $\Psi$ intertwines the actions of $\beta$ and $\alpha$.
\end{proof}

Using the deformation maps the problem of dependence on representations can be reduced to the problem of dependence on metrics on a fixed target manifold. In this case the results of \cite{EellsLemaire} can be used. In their paper Eells and Lemaire only prove smooth dependence so for completeness we prove a version of their result in the real analytic category.
\begin{proposition}\label{prop:implicitfunctiontheorem}
Let $M, N$ be real analytic manifolds with $M$ compact. Let $(g_t)_{t\in D_\epsilon}, (h_t)_{t\in D_\epsilon}$ be real analytic families of metrics on $M$ respectively $N$. If $f_0 \colon (M,g_0) \to (N,h_0)$ is a harmonic map such that $\nabla^2 E(f_0)$ is non-degenerate then for every $k\in \N, 0 < \alpha < 1$ there exists a $\delta>0$ and a unique real analytic map $F \colon D_\delta \to C^{k,\alpha}(M, N)$ such that $F(0) = f_0$ and each $F(t)$ is a harmonic map $(M,g_t) \to (N,h_t)$. 
\end{proposition}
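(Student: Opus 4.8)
The plan is to realize harmonic maps as zeros of the tension field operator and to apply the real analytic implicit function theorem for maps between Banach manifolds, with the Jacobi operator playing the role of the invertible linearization. Since $M$ is compact, $C^{k,\alpha}(M,N)$ is a real analytic Banach manifold, and over it sits a Banach vector bundle $\mathcal{E}$ whose fibre over $f$ is the space $C^{k-2,\alpha}(f^*TN)$ of H\"older sections of the pulled-back tangent bundle. The assignment $(t,f) \mapsto \tau_{g_t,h_t}(f)$, where $\tau_{g_t,h_t}(f) = \tr_{g_t}\nabla^{h_t} df$ is the tension field computed with the domain metric $g_t$ and target metric $h_t$, defines a section $\Phi$ of the pullback of $\mathcal{E}$ to $D_\epsilon \times C^{k,\alpha}(M,N)$, and $f$ is harmonic as a map $(M,g_t)\to(N,h_t)$ precisely when $\Phi(t,f)=0$. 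I would fix a chart around $f_0$ by means of the fibrewise exponential map of a fixed real analytic background metric on $N$, so that a neighbourhood of $f_0$ is parametrized by $V \mapsto \exp_{f_0} V$ for $V\in C^{k,\alpha}(f_0^*TN)$; parallel transport along the geodesics $s\mapsto \exp_{f_0}(sV)$ trivializes $\mathcal{E}$ over this neighbourhood, so that in these trivializations $\Phi$ becomes an honest map $D_\epsilon \times \mathcal{U} \to C^{k-2,\alpha}(f_0^*TN)$ defined on an open set $\mathcal{U}\subset C^{k,\alpha}(f_0^*TN)$ containing $0$.

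Next I would verify that $\Phi$ is real analytic. In local coordinates the tension field is a universal real analytic expression in the $1$- and $2$-jet of $f$ and in the metrics $g_t,h_t$ together with their first derivatives, all of which depend real analytically on $t$ by hypothesis. This is exactly the point where the present argument must improve on \cite{EellsLemaire}, which establishes only smoothness: one has to show that a real analytic map between the relevant finite-dimensional manifolds induces a real analytic Nemytskii (substitution) operator between the corresponding H\"older spaces, and that compositions of such operators remain real analytic. I expect this to be the main obstacle of the proof, since checking real analyticity of substitution operators between Banach spaces requires controlling the uniform convergence of the relevant Taylor expansions over the compact manifold $M$.

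Finally I would compute the partial derivative of $\Phi$ in the section direction at the point corresponding to $f_0$. Differentiating the tension field in the direction of a section $V\in C^{k,\alpha}(f_0^*TN)$ reproduces, up to sign, the Jacobi operator $J_{f_0}(V)=\tr_{g_0}\nabla^2 V + \tr_{g_0}R^N(V,df_0\cdot)df_0\cdot$ appearing in \Cref{lem:nondegeneracy}. This is a second-order, formally self-adjoint elliptic operator of Laplace type. The hypothesis that $\nabla^2 E(f_0)$ be non-degenerate is equivalent to $\ker J_{f_0}=0$, and together with ellipticity and self-adjointness on the compact manifold $M$ the Schauder theory then yields that $J_{f_0}\colon C^{k,\alpha}(f_0^*TN)\to C^{k-2,\alpha}(f_0^*TN)$ is a Banach space isomorphism. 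With the partial derivative invertible and $\Phi$ real analytic, the real analytic implicit function theorem supplies a $\delta>0$ and a unique real analytic map $t\mapsto V(t)\in C^{k,\alpha}(f_0^*TN)$ with $V(0)=0$ and $\Phi(t,V(t))=0$ for all $t\in D_\delta$. Setting $F(t)=\exp_{f_0}V(t)$ then gives the desired real analytic family of harmonic maps with $F(0)=f_0$, and uniqueness of $F$ as a map into $C^{k,\alpha}(M,N)$ follows from the uniqueness clause of the implicit function theorem after possibly shrinking $\delta$.
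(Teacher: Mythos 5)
Your overall architecture is the same as the paper's: realize harmonicity as the zero set of the tension field operator $\tau \colon C^{k+2,\alpha}(M,N)\times D_\epsilon \to C^{k,\alpha}(M,TN)$, identify the linearization in the map direction with the Jacobi operator, invoke non-degeneracy of $\nabla^2E(f_0)$ plus Schauder theory to get a Banach space isomorphism, and conclude with the real analytic implicit function theorem. That part of your proposal is sound. But the proof is incomplete at exactly the point you yourself flag: you write that real analyticity of the substitution (Nemytskii) operator ``is the main obstacle of the proof'' and then do not resolve it. Since this is precisely the step where the real analytic statement goes beyond the smooth result of \cite{EellsLemaire}, leaving it open means the proposition is not actually proved; everything else in your argument is standard and already available in the smooth category.

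For comparison, the paper closes this gap in two moves. First, it factors $\tau$ as $T_* \circ J$, where $J$ is the second prolongation $\phi \mapsto j^2\phi$ (which in natural charts is a bounded linear map, hence trivially real analytic) and $T_*$ is post-composition with a fibrewise real analytic map $T \colon J^2(M,N)\times D_\epsilon \to TN$ built from the coordinate expression of the tension field. This reduces the whole question to real analyticity of a zeroth-order composition operator $\phi \mapsto T(\phi(\cdot),t)$. Second, it proves that reduced statement as a real analytic $\Omega$-lemma (\Cref{lem:omegalemma}): extend the real analytic map to a holomorphic map $\widetilde{F}$ on a complex neighbourhood, observe that the induced operator $\Omega_{\widetilde{F}}$ between the complexified H\"older spaces is $C^1$ with complex-linear derivative $\Omega_{D_2\widetilde{F}}$ by the smooth $\Omega$-lemma, conclude that $\Omega_{\widetilde{F}}$ is complex analytic, and restrict. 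This complexification trick is what lets one avoid the direct estimates on Taylor expansions that you anticipate; if you want to complete your proof you should either reproduce this argument or carry out those uniform estimates explicitly. A minor additional point: your indexing $C^{k,\alpha}\to C^{k-2,\alpha}$ implicitly requires $k\ge 2$, whereas the statement is claimed for all $k\in\N$; the paper works with $C^{k+2+\alpha}\to C^{k+\alpha}$ and then regards the solution as valued in $C^{k,\alpha}(M,N)$, which avoids this issue.
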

\begin{proof}
For each $t\in D_\epsilon$ a $C^2$ map $\phi \colon (M,g_t) \to (N,h_t)$ is harmonic if ${\tau_t(\phi) = \tr_{g_t} \nabla d\phi=0}$ where $\nabla$ is the connection on $T^*M \otimes \phi^* TN$ induced by $g_t$ and $h_t$. In local coordinates $(x^i)_i$ on $M$ and $(u^\alpha)_\alpha$ on $N$, $\tau_t(\phi)$ is given by
\[
\tau_t(\phi)^\gamma = (g_t)_{ij}
\left\lbrace
\parder{^2 \phi^\gamma}{x^i \partial x^j}
- (\Gamma_{g_t})_{ij}^k \parder{\phi^\gamma}{x^k} + (\Gamma_{h_t})_{\alpha\beta}^\gamma (\phi)
\parder{\phi^\alpha}{x_i}\parder{\phi^\beta}{x_j}
\right\rbrace
\]
here $\Gamma_{g}$ denote the Christoffel symbols of a metric $g$. We combine the tension fields for different $t\in D_\epsilon$ in a map
\[
\tau \colon C^{k+2+\alpha}(M,N) \times D_\epsilon \to C^{k+\alpha}(M,TN).
\]
We claim this map is real analytic. To see this we write $\tau$ as a composition of two real analytic map. First we consider the second prolongation map
\[
J \colon C^{k+2+\alpha}(M,N) \to C^{k+\alpha}(M, J^2(M,N))
\]
mapping a map $\phi \colon M \to N$ to its 2-jet $j^2\phi$. A diffeomorphism between a neighbourhood of the zero section in $\phi^*TN$ and a neighbourhood of the image of $\text{graph}(\phi)$ in $M\times N$ induces charts of the two mapping spaces modelled on $\Gamma^{k+2+\alpha}(\phi^*TN)$ and $\Gamma^{k+\alpha}(J^2(M,\phi^*TN))$ respectively. In these charts the second prolongation map is a bounded linear map so in particular it is real analytic. Secondly, we consider the map $T\colon J^2(M,N) \times D_\epsilon \to TN$ given in local coordinates (induced by $(x^i)_i$ on $M$ and $(u^\alpha)_\alpha$ on $N$) by
\[
(x^i, u^\alpha, u^{\alpha}_{i}, u^{\alpha}_{ij}, t) \mapsto (g_t)_{ij}
\left\lbrace
u^\gamma_{ij}
- (\Gamma_{g_t})_{ij}^k u^\gamma_{k} + (\Gamma_{h_t})_{\alpha\beta}^\gamma (u)
u^\alpha_i u^\beta_j
\right\rbrace
\]
By assumption the coefficients $(g_t)_{ij}, (\Gamma_{g_t})_{ij}^k$ and $(\Gamma_{h_t})_{\alpha\beta}^\gamma$ are real analytic functions so the map $T$ is real analytic. It now follows from the $\Omega$-lemma (see \Cref{lem:omegalemma} below) that $T$ induces a real analytic map
\begin{align*}
T_*\colon &C^{k+\alpha}(M, J^2(M,N))\times D_\epsilon \to C^{k+\alpha}(M,TN)\\
&(\psi,t) \mapsto T(\psi(\cdot),t)
\end{align*}
The map $\tau$ is a composition of $T_*$ and $J$ and is therefore real analytic.

As discussed in \cite{EellsLemaire}[p. 35] the partial derivative of $\tau$ with respect to the first factor $(D_1 \tau)_{(f_0,0)} \colon T C^{k+2+\alpha}(M,N) \to T_0 T C^{k+\alpha}(T,N)$ is an isomorphism of Banach spaces precisely when $\nabla^2 E(f_0)$ is non-degenerate. Hence we can apply a real analytic version of the implicit function theorem for Banach spaces (e.g. \cite{Whittlesey}) to obtain, for $\delta>0$ small enough, a unique real analytic map $F \colon D_\delta \to C^{k+2+\alpha}(M,N)$ such that $F(0) = f_0$ and $\tau_t(F(t)) = 0$ for all $t\in D_\delta$.
\end{proof}
\begin{lemma}[The $\Omega$-Lemma]\label{lem:omegalemma}
Let $M, N$ and $P$ be real analytic manifolds with $M$ compact. Suppose $F \colon N \to P$ is real analytic. Then $F$ induces a real analytic map
\[
\Omega_F \colon C^{k,\alpha}(M,N) \to C^{k,\alpha}(M, P) \colon \phi \mapsto F \circ \phi
\]
for all $k\in \N, 0 < \alpha < 1$.
\end{lemma}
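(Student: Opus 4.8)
The plan is to reduce the statement to a local, Euclidean model and then to exhibit $\Omega_F$ as a convergent power series of bounded multilinear maps at each point. Real analyticity of a map between open subsets of real Banach spaces is a local condition in the domain, so it suffices to verify it in charts near an arbitrary $\phi_0 \in C^{k,\alpha}(M,N)$ and its image $\psi_0 = F \circ \phi_0$. I would fix real analytic Riemannian metrics on $N$ and $P$; their exponential maps are real analytic, being flows of the real analytic geodesic spray (as already used above for the deformation maps). A neighbourhood of $\phi_0$ is then parametrised by the chart $v \mapsto \exp^N_{\phi_0(\cdot)}(v(\cdot))$ on the Banach space $\Gamma^{k,\alpha}(\phi_0^* TN)$ of $C^{k,\alpha}$ sections, and similarly near $\psi_0$. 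In these charts $\Omega_F$ becomes the section-composition operator $v \mapsto \Theta(\cdot, v(\cdot))$ induced by the fibre-preserving map
\[
\Theta(m,\xi) = \big(\exp^P_{\psi_0(m)}\big)^{-1}\Big(F\big(\exp^N_{\phi_0(m)}(\xi)\big)\Big),
\]
which is real analytic in the fibre variable $\xi$ and of class $C^{k,\alpha}$ in the base point $m$ (the base regularity is limited only because $\phi_0$ is merely $C^{k,\alpha}$). Choosing a finite cover of the compact manifold $M$ by charts over which both pullback bundles trivialise, and using a subordinate partition of unity, realises $\Gamma^{k,\alpha}(\psi_0^* TP)$ as a complemented closed subspace of a finite product of local section spaces $C^{k,\alpha}(\overline{V_j}, \R^{m})$; since real analyticity into a product is checked factorwise, I am reduced to the model statement: for $F \colon V \times U \to \R^m$ real analytic in the $U$-variable and $C^{k,\alpha}$ in $V \subset \R^d$, the operator $\phi \mapsto \big(x \mapsto F(x,\phi(x))\big)$ is real analytic from $C^{k,\alpha}(V,U)$ to $C^{k,\alpha}(V,\R^m)$.

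Next I would write down the candidate expansion. Expanding $F$ in its fibre Taylor series about $\phi_0$ gives, at least formally,
\[
\Omega_F(\phi_0 + w) = \sum_{\beta} c_\beta\cdot w^{\beta}, \qquad c_\beta(x) = \frac{1}{\beta!}\,\partial_y^\beta F\big(x,\phi_0(x)\big),
\]
with $\beta$ ranging over multi-indices and $w^\beta$ the pointwise monomial; equivalently the candidate $j$-th derivative is the fibrewise symmetric multilinear form
\[
\big(D^j\Omega_F(\phi)(w_1,\dots,w_j)\big)(x) = \partial_y^j F\big(x,\phi(x)\big)\big(w_1(x),\dots,w_j(x)\big).
\]
Because $\overline V$ is compact, $C^{k,\alpha}(\overline V)$ is a Banach algebra under pointwise multiplication (after rescaling the norm to make it submultiplicative), so each term $w \mapsto c_\beta\cdot w^\beta$ is a bounded homogeneous polynomial with operator norm at most $\|c_\beta\|_{k,\alpha}\,C^{|\beta|}$ for a fixed constant $C$.

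The heart of the argument, and the step I expect to be the main obstacle, is the convergence estimate. Real analyticity of $F$ supplies Cauchy-type bounds, uniform over the compact graph of $\phi_0$: there are constants $A, R>0$ with
\[
\big\|\partial_x^\gamma \partial_y^\beta F\big\|_{\infty} \le A\,\frac{(|\gamma|+|\beta|)!}{R^{|\gamma|+|\beta|}}
\]
on a neighbourhood of that graph, for all $|\gamma|\le k$. Feeding these into the chain rule for $c_\beta(x) = \tfrac1{\beta!}\partial_y^\beta F(x,\phi_0(x))$ — whose base derivatives up to order $k$ fall on the fixed, $C^{k,\alpha}$-bounded map $\phi_0$, and whose Hölder seminorm is likewise controlled — yields $\|c_\beta\|_{k,\alpha} \le A'\,R'^{-|\beta|}$ up to a polynomial factor in $|\beta|$, with $A', R'>0$ independent of $\beta$. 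Consequently
\[
\sum_{\beta} \|c_\beta\|_{k,\alpha}\,(Cr)^{|\beta|} < \infty
\]
for every $r$ below a positive radius, so the series converges in operator norm on a ball about $\phi_0$ and represents $\Omega_F$ there. This exhibits $\Omega_F$ as locally the sum of a convergent power series of bounded multilinear maps, which is precisely real analyticity. The delicate point is to carry the factorial and geometric bookkeeping through the $C^{k,\alpha}$ norm while simultaneously honouring the Hölder seminorm and the base derivatives that land on the merely $C^{k,\alpha}$ map $\phi_0$; once these estimates are in place, the reductions of the first paragraph finish the proof. An alternative I would keep in reserve is to complexify: extend $F$ to a holomorphic map on a complex neighbourhood of $N$, complexify the section spaces, and invoke the local-boundedness criterion for holomorphy; this is cleaner in spirit but requires setting up the complexified mapping spaces, so I would prefer the explicit power-series route above.
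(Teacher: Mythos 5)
Your proposal is correct in outline but takes a genuinely different route from the paper's. Both arguments begin with the same Abraham-style reduction to a local Euclidean model over a compact set (the paper states it as: for $K\subset\R^n$ compact, $V\subset\R^p$ open and $F\colon K\times V\to\R^q$ real analytic, the map $\phi\mapsto F(\cdot,\phi(\cdot))$ is real analytic from $C^{k,\alpha}(K,V)$ to $C^{k,\alpha}(K,\R^q)$). From that point the paper writes down no Cauchy estimate at all: it extends $F$ to a complex analytic map $\widetilde{F}$ on a neighbourhood of $K\times V$ in $\C^n\times\C^p$, applies the already-known \emph{smooth} $\Omega$-lemma to see that $\Omega_{\widetilde{F}}$ is $C^1$ with derivative $\Omega_{D_2\widetilde{F}}$, observes that this derivative is complex linear, and invokes the fact that a $C^1$ map between complex Banach spaces with complex linear derivative is complex analytic; restricting back to the real points gives the lemma. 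That is precisely the complexification route you mention only as a fallback. Your primary route --- fibrewise Taylor expansion about $\phi_0$, the Banach-algebra property of $C^{k,\alpha}$, and geometric decay of $\norm{c_\beta}_{k,\alpha}$ --- is a legitimate, more self-contained alternative and has the virtue of exhibiting the power series of $\Omega_F$ explicitly; but its entire weight rests on the step you yourself flag and do not carry out, namely pushing the chain rule and the H\"older seminorm of the $k$-th derivative through $c_\beta(x)=\tfrac{1}{\beta!}\partial_y^\beta F(x,\phi_0(x))$ while keeping the bound geometric in $\abs{\beta}$ (note that $(\abs{\gamma}+\abs{\beta})!/\beta!$ becomes geometric only after absorbing the multinomial factor $\abs{\beta}!/\beta!\le p^{\abs{\beta}}$, which genuinely shrinks the radius, and that one must separately check the $C^{k,\alpha}$-limit of the partial sums is $\Omega_F(\phi_0+w)$, e.g.\ via pointwise convergence). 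These estimates do close, so there is no error, but the paper's complexification trick delegates all of that bookkeeping to the smooth case plus one soft theorem of Banach-space complex analysis, which is why it is the shorter proof.
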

Compare with \cite[Theorem 11.3]{Abraham}. Unfortunately a proof of the real analytic case as stated here does not seem to be available in the literature. We give a sketch of the proof.

\begin{proof}[Sketch of proof of \Cref{lem:omegalemma}]
By following the same steps as in \cite{Abraham} the statement can be reduced to a local version (c.f \cite[Theorem 3.7]{Abraham}) i.e. it is enough to prove that if $K \subset \R^n$ is compact, $V\subset \R^p$ open and $F \colon K \times V \to \R^q$ real analytic then $\Omega_F \colon C^{k,\alpha}(K, V) \to C^{k,\alpha}(K, \R^q)$ given by $[\Omega_F(\phi)](x) = F(x, \phi(x))$ is a real analytic map between Banach spaces. To this end we observe that since $F$ is real analytic it can be extended to a complex analytic map $\widetilde{F} \colon U \to \C^q$ on an open set $U \subset \C^n \times \C^p$ containing $K \times V$. Let $\widetilde{V} \subset \C^p$ be an open such that $K \times V \subset K\times \widetilde{V} \subset U$. Then $\widetilde{F}$ induces a map $\Omega_{\widetilde{F}} \colon C^{k,\alpha}(K, \widetilde{V}) \to C^{k,\alpha}(K, \C^q)$ between complex Banach spaces which extends $\Omega_F$. Applying the smooth version of the $\Omega$-Lemma yields that $\Omega_{\widetilde{F}}$ is a $C^1$ map with derivative given by $D\Omega_{\widetilde{F}} = \Omega_{D_2 \widetilde{F}}$. Since $\widetilde{F}$ is holomorphic we see that this derivative is complex linear. It now follows from \cite[Theorem A5.3]{Hubbard} that $ \Omega_{\widetilde{F}}$ is a complex analytic map. We conclude that $\Omega_F$, which is a restriction of $\Omega_{\widetilde{F}}$ to $C^{k,\alpha}(K, V)$, is real analytic.\end{proof}

We can now prove the statement of our main theorem.
\begin{proof}[Proof of \Cref{thm:mainresult}]
We set $N = X/\rho_0(\Gamma)$ (recall that by assumption the action of $\rho_0$ on $X$ is free and proper so $N$ is a manifold). Since $\rho_0$ is reductive and $Z_G(\im \rho_0)$ contains no semi-simple elements there exists a unique $\rho_0$-equivariant harmonic map $f\colon \widetilde{M} \to X$. This map descends to a harmonic map $f \colon M \to N$. We denote by $o'$ the point in $N$ covered by the base point $o$ in $X$. The set $U_R$ descends to the set $V = B(o', R)$ in $N$. We choose $R>0$ large enough such that the image of $f$ is contained in $B(o', R)$.

Let $(\Phi_t)_{t\in D_\delta}$ be the family of deformation maps as in \Cref{prop:deformationmaps}. We denote by $m$ the Riemannian metric on the symmetric space $X$. Define a family of metrics $(h_t)_{t\in D_\delta}$ on $U_R$ by $h_t = \Phi_t^* m$. By Property (\ref{prop:deformationmaps}.\ref{property:realanalytic}) this is a real analytic family of metrics. We observe that for $\gamma\in \Gamma$
\begin{align*}
\rho_0(\gamma)^* h_t = \rho_0^*\Phi_t^* m = \Phi_t^* \rho_t(\gamma)^* m = \Phi_t^* m = h_t
\end{align*}
using Property (\ref{prop:deformationmaps}.\ref{property:intertwining}) and the fact that each $\rho_t$ acts by isometries on $X$. We conclude that each $h_t$ is $\rho_0(\Gamma)$-invariant hence the family of metrics descends to a family of metrics, also denoted $(h_t)_{t\in D_\delta}$, on $V$. By \Cref{lem:nondegeneracy} the Hessian $\nabla^2E(f)$ is non-degenerate so \Cref{prop:implicitfunctiontheorem} yields, after shrinking $\delta$, a unique real analytic map $G \colon D_\delta \to C^{k,\alpha}(M, V)$ such that $G(t)$ is a harmonic map from $(M,g_t)$ to $(V, h_t)$ for each $t\in D_\delta$. By choosing for each $t$ a $\rho_0$-equivariant lift we can view $G$ as a continuous map $G \colon D_\delta \to C^{k,\alpha}(\widetilde{M}, U_R)$. We define $F$ by composing with the deformation maps, $F(t) = \Phi_t \circ G(t)$. By Property (\ref{prop:deformationmaps}.\ref{property:intertwining}) every map $F(t)$ is $\rho_t$-equivariant. By construction, each $\Phi_t$ is an open isometric embedding of $(V, h_t)$ into $(X,m)$ hence each $F(t)$ is also harmonic. Finally by Property (\ref{prop:deformationmaps}.\ref{property:realanalytic}) we see that the map $F \colon D_\delta \to C^{k,\alpha}(\widetilde{M}, X)$ is continuous and real analytic as a map $F(\cdot)\vert_\Omega \colon D_\delta \to C^{k,\alpha}(\Omega,X)$.
\end{proof}

\bibliographystyle{alpha}
\bibliography{bibliography}

\end{document}